
\documentclass[12pt]{article}
\usepackage{amssymb}
\usepackage{amsmath}
\usepackage{amsfonts}

\setcounter{MaxMatrixCols}{10}

\oddsidemargin=0cm \textwidth=16.5cm \textheight=23.5cm
\topmargin=-1.5cm
\newtheorem{theorem}{Theorem}

\newtheorem{definition}[theorem]{Definition}

\newtheorem{lemma}[theorem]{Lemma}

\newtheorem{remark}[theorem]{Remark}

\newenvironment{proof}[1][Proof]{\noindent\textbf{#1.} }{\ \rule{0.5em}{0.5em}}

\begin{document}

\title{Pseudo-differential operators and related additive geometric
stable processes}
\author{Luisa Beghin \thanks{%
luisa.beghin@uniroma1.it, Department of Statistical Sciences, Sapienza -
University of Rome } \and Costantino Ricciuti \thanks{%
costantino.ricciuti@uniroma1.it, Department of Statistical Sciences,
Sapienza -University of Rome}}
\maketitle

\begin{abstract}
Additive processes are obtained from L\'{e}vy
ones by relaxing the condition of stationary increments, hence they are spatially (but not
temporally) homogeneous. By analogy with the case of time-homogeneous Markov processes,
one can define an infinitesimal generator, which is, of course,
 a time-dependent operator. Additive versions of stable and Gamma processes
have been considered in the literature. We
introduce here time-inhomogeneous generalizations of the well-known geometric
stable process, defined by means of time-dependent versions of fractional
pseudo-differential operators of logarithmic type. The local L\'{e}vy measures are
expressed in terms of Mittag-Leffler functions or $H$-functions with time-dependent
parameters. This article also presents some results
 about propagators related to additive processes.

\ \newline
\emph{AMS Subject Classification (2010):} 60G52; 60G51; 26A33.\newline
\emph{Keywords:} Time-inhomogeneous processes; geometric stable
distributions; fractional logarithmic operator; additive processes;
variance gamma process.
\end{abstract}



\section{Introduction}

Geometric stable random variables (GS r.v.'s) have been studied
since the Eighties and widely applied, in particular, in modelling
data with heavy-tails behavior, in mathematical finance and other
fields of research (see \cite{KOZ3} and \cite{KOZ}, for the
univariate and multivariate cases, respectively, and also
\cite{Meer1}). Indeed, the GS laws are characterized by heavy
tails, unboundedness at zero and by stability properties (with
respect to geometric
summation). \\
The GS process can be defined by means of the $\alpha$-stable process, as follows.
Let us consider an $\alpha $-stable process $S_{\alpha ,\theta }:=\left\{
S_{\alpha ,\theta }(t),t\geq 0\right\} $ which has (according to Feller's parametrization) the
following characteristic function
\begin{equation}
\mathbb{E}e^{i\xi S_{\alpha ,\theta }(t)}=\exp \{-t|\xi |^{\alpha
}e^{isign(\xi )\pi \theta /2}\}=:\exp \{-t\psi _{\alpha ,\theta }(\xi
)\}\qquad \alpha \in (0,2),\alpha \neq 1 .
\label{funzione caratteristica processo stabile}
\end{equation}%
where $|\theta|\leq min\{ \alpha, 2-\alpha \}$.
For $\alpha =1$, the characteristic function can be written in the form (%
\ref{funzione caratteristica processo stabile}) in the symmetric case only, i.e. for $%
\theta =0$. So, in order to have unified formulas, we exclude the case $%
\alpha =1$ from our exposition.  We will denote by $G_{\alpha ,\theta }:=\left\{ G_{\alpha ,\theta }(t),t\geq
0\right\} $ the univariate GS process; which can be represented (see \cite%
{GRZ}) as%
\begin{equation}
G_{\alpha ,\theta }(t):=S_{\alpha ,\theta }(\mathit{\Gamma }(t)),\qquad
t\geq 0,  \label{sub}
\end{equation}%
where \textit{$\Gamma $}$:=\left\{ \mathit{\Gamma }(t),t\geq 0\right\} $ is
an independent Gamma subordinator, with density%
\begin{equation}
f_{\mathit{\Gamma }(t)}(x)=\frac{e^{-x/b}x^{at-1}}{\Gamma (at)b^{at}},\qquad
x,t\geq 0,\text{ }a,b>0  \label{rr}
\end{equation}%
and characteristic function%
\begin{equation*}
\mathbb{E}e^{i\xi \mathit{\Gamma }(t)}=\left( 1-i\xi b\right) ^{-at}.
\end{equation*}%
We recall that the L\'{e}vy measure of $\mathit{\Gamma }$ is given by $\nu _{%
\mathit{\Gamma }}(dx)=ax^{-1}e^{-x/b}dx.$ We will put for simplicity $a=1.$
As a consequence of (\ref{sub}), $G_{\alpha ,\theta }$ is a L\'{e}vy process (see, for
example, \cite{SIK}), with characteristic exponent
\begin{equation}
\eta _{G_{\alpha ,\theta }}(\xi ):=\frac{1}{t}\ln \mathbb{E}e^{i\xi
G_{\alpha ,\theta }(t)}=-\ln \left( 1+b\psi _{\alpha ,\theta }(\xi )\right)
,\qquad \xi \in \mathbb{R}.  \label{ss}
\end{equation}%
We note that, for $\theta =0,$ the process $G_{\alpha ,\theta }$
is symmetric; in particular, for $\alpha =2$, it reduces to the
well-known variance gamma (VG) process, which, by (\ref{sub}), can
be represented as $G_{2 ,0 }(t):=B(\mathit{\Gamma }(t))$, $t\geq
0$, where $B:=\left\{ B(t),t\geq0\right\}$ is a standard Brownian
motion, independent of $\Gamma$. The VG is applied in option
pricing, since it allows for a wider modelling of skewness and
kurtosis than the Brownian motion does. Moreover the variance
gamma process has been successfully applied in the modelling of
credit risk in structural models. The pure jump nature of the
process and the possibility to control skewness and kurtosis of
the distribution allow the model to price correctly the risk of
default of securities having a short maturity, something that is
generally not possible with structural models in which the
underlying assets follow a Brownian motion (for more details, see,
for example, \cite{PAP} and \cite{COT}). For $\alpha \neq 2$ the
symmetric GS law is also called Linnik distribution (see
\cite{KOZ4}).\\
On the other hand, in the completely positively asymmetric case, i.e. for $\theta= -\alpha$ and for $%
\alpha \in (0,1),$ the GS law reduces to the so-called
Mittag-Leffler distribution (see \cite{KOZ4}),
 while the corresponding process is called GS
subordinator (since it is increasing). Its L\'{e}vy density,
being of order $\alpha /x$ for $x$ near the origin, is almost integrable
near zero, so that the subordinator is very slow. Thus it can be used for time-changing
another process, in order to slow it down (see \cite{SIK}, \cite{KOZ5}).
New families of subordinators, with explicit transition semigroup, have been
defined and analyzed in \cite{BUR}, along with some useful examples.\\

We present here time-inhomogeneous versions of the GS process.
There is a wide literature inspiring this subject.
 In  \cite{LEG} and \cite{FAL} the authors studied the so called multistable processes,
 namely inhomogeneous extensions of  stable L\'{e}vy ones, which are obtained by letting
 the stability parameter vary in time (for a different kind of multistable processes consult \cite{falco2}).
Such processes turned out to be very useful in financial and physical applications,
where the data display jumps with varying intensity (for financial applications of
additive processes see, for example \cite{KOV}).  Moreover,  time inhomogeneous
versions of Gamma subordinators and VG processes can be respectively found in  \cite{CIN} and \cite{MAD2}.
On the same line of research, in \cite{ORT} the authors defined the so called
inhomogeneous subordinators, i.e. non decreasing additive processes, which  are used as
models of random time change to extend the theory of Bochner subordination.
A remarkable case is that of the multistable subordinator considered in \cite{MOL}
(for an application of  Markov processes time-changed by multistable subordinators see \cite{BEG4}).
For the sake of completeness, we mention a further approach to inhomogeneous L\'{e}vy
processes, which are defined by letting a parameter depend on time, through a
subordinator (see, for example, \cite{LEO}, in the Poisson case).

We will define two, alternative, time-inhomogeneous versions of the GS process. The first one
is obtained by letting the parameters $\alpha$ and $\theta$ vary in time; then the generator of
the process is defined by means of a Riesz-Feller space-fractional derivative
with time-varying parameters (i.e. $\alpha (t)$ and $%
\theta (t)$). We will obtain some results on the tails' behavior
of the density
and the L\'{e}vy measure (at least in the subordinator case, i.e. for $\alpha (t) \in (0,1)$ and $%
\theta (t)=-\alpha (t)$, $\forall t$) for the new process.
 However, as we will see, we cannot
recover, as special case, an inhomogeneous version of the VG process, since
only the standard VG process can be obtained by putting $\alpha (t)=2$ and $%
\theta (t)=0$, for any $t.$ Moreover, no subordinating relationship, similar to (\ref{sub}) can be
established. Therefore we consider a second inhomogeneous version of the GS process,
by letting the variance parameter $b$ depend on $t$ and keeping $\alpha$ and $\theta$ constant in time.
The two previous drawbacks are then overcome, since this new process can be constructed as a stable process
time-changed by an inhomogeneous Gamma subordinator. Moreover the L\'{e}vy measure can be evaluated,
in this case, for any value of $\theta$ and it will expressed in terms of H-functions.

The plan of the paper is the following. In section 2 we recall some facts on 
time-inhomogeneous Markov processes and related propagators, following the line of 
\cite{JAC1}, \cite{JAC2}, \cite{BOTT1}, \cite{BOTT} and \cite{BOTT3}, 
and we underline new aspects on the special case of additive processes. In section 3 we write the generators of the GS processes
 as fractional operators of logarithmic type, which have been treated, in \cite{BEG},
 \cite{BEG2} and \cite{BEG3}, in the homogeneous case.  In Section 4 and 5 we construct
 the two additive geometric stable processes and present all the related results,
  together with some relevant particular cases. \\

\section{Notation and preliminary results}

Additive processes are obtained from L\'{e}vy ones by relaxing the condition
of stationarity of the increments (see, for example, \cite{SAT}, p.47).
Indeed a process $X:=\{X(t),t\geq 0\}$ is said to be \textit{additive} if

\begin{itemize}
\item $X(0)=0$ almost surely (a.s.)

\item $X$ has independent increments

\item $X$ is stochastically continuous
\end{itemize}

Thus $X$ is a spatially (but not temporally) homogeneous Markov process.
In this paper, we deal with one-dimensional processes. For
any $0\leq s\leq t$, the distribution $\mu _{s,t}$ of the increment $%
X(t)-X(s)$ is such that
\begin{equation}
\mathbb{E}e^{ip(X(t)-X(s))}=\int_{\mathbb{R}} e^{ipx}\mu _{s,t}(dx)=e^{\int_{s}^{t}\eta
(p,\tau )d\tau }\qquad 0\leq s\leq t,\text{ }p\in \mathbb{R},  \label{ch4}
\end{equation}%
where
\begin{equation}
\eta (p,t)=ib_{t}p-\frac{1}{2}z_{t}p^{2}+\int_{\mathbb{R}}\bigl (%
e^{ipy}-1-ipy1_{[-1,1]}(y)\bigr)\nu _{t}(dy). \label{esponente caratteristico}
\end{equation}%
Here $(b_{t},z_{t},\nu _{t})$ is called the characteristic triplet of $X$.
In particular, $b_t\in \mathbb{R}$, $z_t >0$ and $\nu _{t}$ is the time-dependent L\'{e}vy measure, such that
\begin{equation*}
\int_{\mathbb{R}} (y^2\wedge 1)\nu _{t}(dy)<\infty \qquad \forall t\geq 0.
\end{equation*}%
An additive process is completely determined by the set of measures $\mu
_{s,t},0\leq s\leq t$, since, from (\ref{ch4}), all the finite-dimensional
distributions are completely specified. Indeed, let $%
0=t_{0}<t_{1}<t_{2}<...<t_{n}$ and $\xi _{j}\in \mathbb{R},$ for $j=1,...,n,$
then the $n$-times characteristic function can be written as
\begin{equation}
\mathbb{E}e^{i\bigl (\xi _{1}X(t_{1})+\xi _{2}X(t_{2})+...+\xi _{n}X(t_{n})\bigr )}=%
\mathbb{E}\, exp \biggl \{i\sum_{k=1}^{n}(X(t_{k})-X(t_{k-1}))\sum_{j=k}^{n}\xi _{j} \biggr \},
\label{ch}
\end{equation}%
where the right-hand side is obtained by simple algebraic manipulations.
Then, by independence of the increments, formula (\ref{ch}) reduces to
\begin{equation}
\prod_{k=1}^{n}e^{\int_{t_{k-1}}^{t_{k}}\eta \left(
\sum_{j=k}^{n}\xi _{j},\tau \right) d\tau }=exp \biggl \{\int_{\mathbb{R}}\eta %
\biggl(\sum_{j=1}^{n}\xi _{j}1_{[0,t_{j}]}(\tau ),\tau \biggr)d\tau \biggr \}.
\label{chh}
\end{equation}%
An interesting example of additive process is the so called multistable
process (here denoted as $\mathcal{S}^I_{\alpha , \theta }$),
recently studied in \cite{FAL} and \cite{LEG}.
It can be defined by letting the parameters $\alpha$
and $\theta$ in (\ref{funzione caratteristica processo stabile})
 be time-dependent. Indeed, by assigning the two functions
$t \to \alpha (t)$ and $t\to \theta (t)$ such that
$\alpha (t) \in (0,1)$ and $|\theta (t)| \leq min \bigl (\alpha (t), 2-\alpha (t)\bigr )$, for any $t\geq 0$,
the characteristic function reads
\begin{align*}
\mathbb{E}e^{i\xi \mathcal{S}^I_{\alpha , \theta }(t)}= e^{-\int _0^t \psi _{\alpha (\tau) \theta (\tau)}(\xi)d\tau}
\end{align*}
Hence $\mathcal{S}^I_{\alpha , \theta }$ is a time-inhomogeneous extension of stable processes,
having independent and non-stationary increments, which turned out to be very useful in applications.
In the symmetric case, where $\theta (t)=0 \, \forall t$ and $\psi_{\alpha (t), \theta(t)}=|\xi|^{\alpha (t)}$,
 the joint distribution has the following characterization (see \cite{LEG}, formula (6))
\begin{align*}
\mathbb{E}e^{i\bigl (\xi _{1}\mathcal{S}^I_{\alpha , \theta }(t_1)+\xi _{2}\mathcal{S}^I_{\alpha , \theta }
(t_2)+...+\xi _{n}\mathcal{S}^I_{\alpha , \theta }(t_n) \bigr )}=exp \biggl
\{-\int_{\mathbb{R}}|\sum_{j=1}^{n}\xi _{j}1_{[0,t_{j}]}(\tau )| ^{\alpha (\tau)}d\tau \biggr \},
\end{align*}
for $0<t_{1}<...<t_{n}$.

\subsection{Propagators and time-dependent generators}
 The link between Markov processes and pseudo-differential operators has been studied in \cite{JAC1} and \cite{JAC2}.
  The theory has been developed in subsequent papers, such as \cite{BOTT1}, 
  \cite{BOTT} and \cite{BOTT3}, which mostly regard the case of time-inhomogeneous Markov processes. The role of this section is to deepen  a particular case of time-inhomogeneous Markov processes, namely the additive ones.

Let $X$ be a univariate additive process and let
\begin{equation*}
p_{s,t}(dy)=P(X(t)\in d(x+y)|X(s)=x)
\end{equation*}%
be its transition probability, which is independent of $x$ as $X$ is
space-homogeneous. Observe that, since $X$ is additive,  $p_{s,t}(dy)$  coincides
(see \cite{SAT}, page 55, Thm 10.4) with the law of the increment $\mu _{s,t}(dy)=P(X(t)-X(s)\in dy)$
defined in the previous Section.

Clearly $X$ defines a propagator (namely, a two-parameters semigroup)
\begin{equation}
T_{s,t}f(x)=\int_{\mathbb{R}}f(x+y)p_{s,t}(dy)\qquad 0\leq s\leq t
\label{def}
\end{equation}%
for $f$ in the Banach space  $ C(\mathbb{R})$, equipped with the sup-norm. Of course, $T_{t,t}$ is the identity operator and,
for any $0\leq r\leq s\leq t$, the chain rule $T_{s,t}T_{r,s}=T_{r,t}$ holds.
The time-dependent generator of $T_{s,t}$ is defined (see \cite{KOLO}, page
48, and \cite{RUS}) as the operator
\begin{equation*}
\mathcal{A}_{t}f=\lim_{h\to 0^+}\frac{T_{t,t+h}f-f}{h}
\end{equation*}%
on a subset of $C(\mathbb{R})$ where the limit (meant in the sup-norm of $C(%
\mathbb{R}))$ exists.

Let $\mathcal{S}(\mathbb{R})$ be the Schwartz space of infinitely
differentiable functions on $\mathbb{R}$, decreasing at infinity (together
with all their derivatives) faster than any power. In analogy to the
standard theory of L\'{e}vy processes,  by restricting the
domain of $T_{s,t}$ to $\mathcal{S}(\mathbb{R})\subset C(\mathbb{R})$, one
can easily find the form of the time-dependent generator of $T_{s,t}$, by means of the
representation as pseudo-differential operator. Indeed, the crucial point is that the Fourier transform
\begin{equation*}
\tilde{f}(p)=\frac{1}{\sqrt{2\pi }}\int_{\mathbb{R}}e^{-ipx}f(x)dx
\end{equation*}%
is a bijective operation on $\mathcal{S}(\mathbb{R})$ and the inverse
transform is defined as
\begin{equation*}
f(x)=\frac{1}{\sqrt{2\pi }}\int_{\mathbb{R}}e^{ipx}\tilde{f}(p)dp.
\end{equation*}

In the following lemma we extend well-known results on L\'{e}vy processes to
the additive case, by writing the pseudo-differential form for propagators and generators.

\begin{lemma}
\label{lemma operatori pseudodifferenziali} Let $X$ be an additive process with
characteristic exponent  $\eta (p,t)$ (given in (\ref{esponente caratteristico}))
continuous in $t$ and such that    $|\eta (p,t)|$ is bounded  in $t$. Let $T_{s,t}$
be the associated propagator. Then, for any $f\in \mathcal{%
S}(\mathbb{R})$,

i) the propagator has the following representation
\begin{align}  \label{rappresentazione pseudodifferenziale del propagatore}
T_{s,t}f(x)= \frac{1}{\sqrt{2\pi}} \int_{\mathbb{R}} e^{ipx} e^{\int _s^t
\eta (p,\tau)d\tau }\tilde{f}(p)dp
\end{align}

ii) the time-dependent generator has the form
\begin{equation}
\mathcal{A}_{t}f(x)=\frac{1}{\sqrt{2\pi }}\int_{\mathbb{R}}e^{ipx}\eta (p,t)%
\tilde{f}(p)dp \label{rappresentazione pseudodifferenziale del generatore}
\end{equation}

iii) the generator can be equivalently written as
\begin{equation*}
\mathcal{A}_{t}f(x)=\frac{1}{2}z_{t}\frac{\partial ^{2}}{\partial x^{2}}%
f(x)+b _{t}\frac{\partial f(x)}{\partial x}+\int_{\mathbb{R}}\left[
f(x+y)-f(x)-y\frac{\partial }{\partial x}f(x)1_{\left\{ |y|\leq 1\right\} }%
\right] \nu _{t}(dy),
\end{equation*}
\end{lemma}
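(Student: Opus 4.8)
The plan is to establish the three representations in sequence, using the Fourier-analytic description of the propagator as the engine and deriving everything else from it.

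First I would prove part (i). Starting from the definition \eqref{def} of $T_{s,t}$ as convolution against $p_{s,t}(dy)=\mu_{s,t}(dy)$, I would write $f(x+y)$ via its inverse Fourier transform, $f(x+y)=\frac{1}{\sqrt{2\pi}}\int_{\mathbb{R}}e^{ip(x+y)}\tilde{f}(p)\,dp$, substitute into the convolution integral, and interchange the order of integration (Fubini). The inner integral in $y$ then produces $\int_{\mathbb{R}}e^{ipy}\mu_{s,t}(dy)=\mathbb{E}e^{ip(X(t)-X(s))}=e^{\int_s^t\eta(p,\tau)d\tau}$ by \eqref{ch4}. Collecting terms gives exactly \eqref{rappresentazione pseudodifferenziale del propagatore}. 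The justification for Fubini is where the hypotheses enter: since $f\in\mathcal{S}(\mathbb{R})$, $\tilde f$ is rapidly decreasing, and the characteristic function factor has modulus $\bigl|e^{\int_s^t\eta(p,\tau)d\tau}\bigr|=e^{\int_s^t\mathrm{Re}\,\eta(p,\tau)d\tau}\le 1$ (because $\mathrm{Re}\,\eta\le 0$), so the integrand is absolutely integrable in $(p,y)$ against the probability measure $\mu_{s,t}$.

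Next, for part (ii), I would insert the representation from (i) into the difference quotient defining $\mathcal{A}_t$. This yields
\begin{equation*}
\frac{T_{t,t+h}f(x)-f(x)}{h}=\frac{1}{\sqrt{2\pi}}\int_{\mathbb{R}}e^{ipx}\,\frac{e^{\int_t^{t+h}\eta(p,\tau)d\tau}-1}{h}\,\tilde f(p)\,dp.
\end{equation*}
As $h\to 0^+$, the continuity of $\eta(p,\cdot)$ gives $\frac{1}{h}\int_t^{t+h}\eta(p,\tau)d\tau\to\eta(p,t)$, whence the bracketed quotient tends pointwise to $\eta(p,t)$. To pass the limit inside the integral I would use dominated convergence, bounding $\bigl|\frac{e^{w}-1}{h}\bigr|$ uniformly using the boundedness hypothesis $|\eta(p,t)|\le C$ together with the rapid decay of $\tilde f$; the elementary estimate $|e^{w}-1|\le|w|e^{|w|}$ with $|w|=\bigl|\int_t^{t+h}\eta d\tau\bigr|\le Ch$ provides a dominating function independent of $h$. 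I must also check that the limit is uniform in $x$ (the sup-norm convergence required by the definition of $\mathcal{A}_t$), but since the $x$-dependence enters only through $|e^{ipx}|=1$, the same dominating function controls the sup over $x$, so sup-norm convergence follows from the same estimate. This gives \eqref{rappresentazione pseudodifferenziale del generatore}.

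Finally, part (iii) is purely algebraic given (ii): I would substitute the explicit L\'evy--Khintchine form \eqref{esponente caratteristico} of $\eta(p,t)$ into \eqref{rappresentazione pseudodifferenziale del generatore} and recognize each term as a Fourier multiplier. Specifically, $\frac{1}{\sqrt{2\pi}}\int e^{ipx}(ip)\tilde f(p)\,dp=\partial_x f(x)$ and $\frac{1}{\sqrt{2\pi}}\int e^{ipx}(-p^2)\tilde f(p)\,dp=\partial_x^2 f(x)$ handle the drift and Gaussian terms, while for the integral term I would interchange the $p$-integral with the $\nu_t$-integral and use $\frac{1}{\sqrt{2\pi}}\int e^{ipx}(e^{ipy}-1-ipy1_{[-1,1]}(y))\tilde f(p)\,dp=f(x+y)-f(x)-y\,\partial_x f(x)1_{\{|y|\le 1\}}$. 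The main obstacle throughout is rigorously justifying the interchanges of integration and the passage to the limit; in particular, for the integral term one must verify that Fubini applies, which rests on $\int(y^2\wedge 1)\nu_t(dy)<\infty$ together with the Schwartz decay controlling the small-$y$ behavior via a second-order Taylor estimate $|f(x+y)-f(x)-y\partial_x f(x)|\le \tfrac12 y^2\sup|f''|$ near the origin and the crude bound $|f(x+y)-f(x)|\le 2\|f\|_\infty$ for $|y|>1$. Collecting the three pieces reproduces the stated generator.
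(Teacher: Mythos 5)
Your overall strategy coincides with the paper's: part (i) is the mirror image of the paper's computation (the paper takes the Fourier transform of $T_{s,t}f$ and inverts, you expand $f$ through its inverse transform and integrate out $y$ first --- same Fubini step, same conclusion), and part (iii) is the same Fourier-multiplier substitution, for which you supply the integrability justification that the paper leaves implicit. The one place where your argument as written does not close is the domination step in (ii). You bound the difference quotient by $|w|e^{|w|}/h\le Ce^{Ch}$ using ``$|\eta(p,t)|\le C$''; but no nontrivial characteristic exponent is bounded uniformly in $p$ --- the hypothesis only gives boundedness in $t$ for each fixed $p$, so your constant is really $C(p)=\sup_{\tau}|\eta(p,\tau)|$, which grows like $|p|^{2}$. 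The resulting candidate dominating function $C(p)e^{C(p)h_{0}}|\tilde f(p)|$ then behaves like $e^{c|p|^{2}}|\tilde f(p)|$ and need not be integrable for a Schwartz $\tilde f$. The fix is cheap and you already hold the ingredient: since $\mathrm{Re}\,\eta\le 0$ (which you used in (i)), one has $|e^{w}-1|\le|w|$ whenever $\mathrm{Re}\,w\le 0$, so the quotient is bounded by $\frac{1}{h}\int_{t}^{t+h}|\eta(p,\tau)|\,d\tau\le C(p)$ with no exponential factor; combined with the standard quadratic growth bound $C(p)\le C(1+|p|^{2})$ --- which is exactly what the paper invokes from Applebaum, and which you need in any case to conclude that $C(p)|\tilde f(p)|$ is integrable, since rapid decay of $\tilde f$ alone does not beat an unquantified $C(p)$ --- dominated convergence goes through, uniformly in $x$ just as you observe. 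This repaired estimate is precisely the paper's argument, phrased there via the mean value theorem.
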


\begin{proof}
i) The Fourier transform of $T_{s,t}f(x)$ is
\begin{align*}
\frac{1}{\sqrt{2\pi }}\int_{\mathbb{R}}dx\, e^{-ipx}\int_{\mathbb{R}%
}f(x+y)p_{s,t}(dy)& =\frac{1}{\sqrt{2\pi }}\int_{\mathbb{R}%
}p_{s,t}(dy)e^{ipy}\int_{\mathbb{R}}dx\,f(x+y)e^{-ip(x+y)} \\
& =exp\biggl (\int_{s}^{t}\eta (p,\tau )d\tau \biggr)\tilde{f}(p)
\end{align*}%
which proves (i), by Fourier inversion.

ii) Let $f$ be in the domain of the generator, i.e. suppose that $\frac{%
T_{t,t+h}f-f}{h}$ converges uniformly to $\mathcal{A}_{t}f$ as $h\to 0^+$. Then it is
sufficient to use the representation (\ref{rappresentazione
pseudodifferenziale del propagatore}) and to apply the pointwise limit
\begin{equation*}
\mathcal{A}_{t}f(x)=\lim_{h \to 0^+}\frac{T_{t,t+h}f(x)-f(x)}{h}%
=\lim_{h\to 0^+}\frac{1}{\sqrt{2\pi }}\int_{\mathbb{R}}e^{ipx}\,\frac{%
e^{\int_{t}^{t+h}\eta (p,\tau )d\tau }-1}{h}\tilde{f}(p)dp
\end{equation*}%
The result immediately follows by exchanging the limit and the integral,
which is permitted by the dominated convergence theorem. Indeed, by the mean value theorem,
\begin{align*}
\biggl |e^{ipx}\,\frac{e^{\int_{t}^{t+h}\eta (p,\tau )d\tau }-1}{h}\tilde{f}%
(p)\biggr |& = \biggl |e^{ipx}\,\frac{e^{\eta (p,\tau ^* )h }-1}{h}\tilde{f}%
(p)\biggr |      \leq \biggl |\eta (p,\tau ^{\ast })\tilde{f}(p)\biggr |  \\ & \leq  \biggl |\eta (p,\tau _{max })\tilde{f}(p)\biggr |    \leq \biggl
|C(1+|p|^{2})\tilde{f}(p)\biggr |
\end{align*}%
where $\tau _{max}$ is the point where $|\eta (p,\tau)|$ has its maximum in
$\tau$ and, in the last inequality, we used [\cite{APP}, p.31]. Moreover
\begin{equation*}
\int_{\mathbb{R}}|C(1+|p|^{2})\tilde{f}(p)dp<\infty
\end{equation*}%
since $|C(1+|p|^{2})\tilde{f}(p)|$ is clearly a Schwartz function.

iii) It is sufficient to insert (\ref{esponente caratteristico}) into (\ref
{rappresentazione pseudodifferenziale del generatore}) and the result is
obtained by Fourier inversion.
\end{proof}

\vspace{0.5cm}

Some of the processes considered here are symmetric. Recall that an additive
process is symmetric if the transition probability is such that $p_{s,t}(dy)=p_{s,t}(-dy)$, for any $s,t\in
\mathbb{R}^{+}$. In this case the characteristic
function of its increments (\ref{ch4}) reads%
\begin{equation}
\mathbb{E}e^{i\xi (X(t)-X(s))}=e^{\int_{s}^{t}\eta (|\xi |,\tau )d\tau
}\qquad 0\leq s\leq t.
\end{equation}%
The following result extends to symmetric additive processes a well-known
property enjoyed by symmetric L\'{e}vy ones (see \cite{APP} p.178).

\begin{lemma} \label{operatori simmetrici}
Let $X$ be a symmetric additive process. Then the associated propagator $%
T_{s,t}$ is self-adjoint in $\mathbb{L}_{2}$.
\end{lemma}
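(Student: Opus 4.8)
The plan is to verify directly that $\langle T_{s,t}f, g\rangle = \langle f, T_{s,t}g\rangle$ for all $f,g\in\mathbb{L}_2(\mathbb{R})$, where $\langle f,g\rangle = \int_{\mathbb{R}} f(x)\overline{g(x)}\,dx$. First I would record that $T_{s,t}$ is a bounded operator (indeed a contraction) on $\mathbb{L}_2$: since $p_{s,t}$ is a probability measure and, by the symmetry hypothesis $p_{s,t}(dy)=p_{s,t}(-dy)$, the operator is the convolution $T_{s,t}f = f * p_{s,t}$, so Young's inequality yields $\Vert T_{s,t}f\Vert_2 \le \Vert f\Vert_2$. This guarantees that the adjoint is well defined and that it suffices to check the identity on a dense subspace.

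The core computation would proceed by Fubini and a change of variables. Writing it out,
\begin{equation*}
\langle T_{s,t}f, g\rangle = \int_{\mathbb{R}}\!\!\int_{\mathbb{R}} f(x+y)\,p_{s,t}(dy)\,\overline{g(x)}\,dx = \int_{\mathbb{R}} p_{s,t}(dy)\int_{\mathbb{R}} f(x+y)\overline{g(x)}\,dx.
\end{equation*}
Substituting $u = x+y$ in the inner integral turns this into $\int_{\mathbb{R}} p_{s,t}(dy)\int_{\mathbb{R}} f(u)\overline{g(u-y)}\,du$; the decisive step is then to invoke the symmetry $p_{s,t}(dy)=p_{s,t}(-dy)$ to replace $y$ by $-y$, after which a second application of Fubini collapses the expression into $\int_{\mathbb{R}} f(u)\overline{\int_{\mathbb{R}} g(u+y)\,p_{s,t}(dy)}\,du = \langle f, T_{s,t}g\rangle$. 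The measure $p_{s,t}$ is real, so the conjugation acts only on $g$.

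I would first carry out this chain of equalities for $f,g$ in a convenient dense class---say $\mathcal{S}(\mathbb{R})$ or $C_c(\mathbb{R})$---where the integrand is absolutely integrable against $dx\otimes p_{s,t}(dy)$ (a product of a bounded, rapidly decaying or compactly supported function with a finite measure), so that both appeals to Fubini are legitimate. Self-adjointness on all of $\mathbb{L}_2$ then follows by density together with the boundedness of $T_{s,t}$ established at the outset.

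The step I expect to be the genuine (if modest) obstacle is precisely the justification of Fubini and the attendant measurability, since general $\mathbb{L}_2$ functions need not be integrable; this is what forces the two-stage argument (dense class first, density extension second) rather than a one-line manipulation. As an alternative I would mention the Fourier-analytic route: for a symmetric process the characteristic exponent $\eta(p,\tau)$ is real and even in $p$ (the drift vanishes and the jump part contributes $\int_{\mathbb{R}}(\cos(py)-1)\nu_\tau(dy)$), so by Plancherel $T_{s,t}$ is the Fourier multiplier with the real, even symbol $\exp\{\int_s^t \eta(p,\tau)\,d\tau\}$, and a real Fourier multiplier is automatically self-adjoint on $\mathbb{L}_2$. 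This bypasses Fubini, but in exchange requires extending the pseudo-differential representation of Lemma \ref{lemma operatori pseudodifferenziali} from $\mathcal{S}(\mathbb{R})$ to $\mathbb{L}_2$ via Plancherel.
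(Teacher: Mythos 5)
Your proposal is correct and follows essentially the same route as the paper's proof: a direct verification of $\langle T_{s,t}f,g\rangle=\langle f,T_{s,t}g\rangle$ using the symmetry $p_{s,t}(dy)=p_{s,t}(-dy)$, a change of variables, and Fubini. Your additional care about justifying Fubini (contraction property via Young's inequality, dense class plus density extension) is a welcome refinement of the paper's more terse argument, but it does not change the substance.
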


\begin{proof}
By (\ref{def}) we can write
\begin{equation*}
T_{s,t}f(x)=\int_{\mathbb{R}}f(x+y)p_{s,t}(dy)=   \int_{\mathbb{R}}f(x+y)p_{s,t}(-dy)= \int_{\mathbb{R}}f(x-y)p_{s,t}(dy)  .
\end{equation*}%
It is enough to show that $<T_{s,t}f;g>=<f;T_{s,t}g>$, for any $f,g\in
\mathbb{L}_{2}$ (where $<\cdot ;\cdot >$ denotes the scalar product in $%
\mathbb{L}_{2}$). Then
\begin{align*}
&<T_{s,t}f;g> =\int_{\mathbb{R}}T_{s,t}f(x)g(x)dx=\int_{\mathbb{R}%
}\int_{\mathbb{R}}f(x-y)p_{s,t}(dy)g(x)dx \\
& =\int_{\mathbb{R}%
}\int_{\mathbb{R}}f(x')p_{s,t}(dy)g(x'+y)dx'= \int _{\mathbb{R}} f(x')T_{s,t}g(x')dx'    =<f;T_{s,t}g>
\end{align*}%
where the order of integration has been inverted by the Fubini theorem.
\end{proof}

\subsection{Time-change by inhomogeneous subordinators}

The so-called non-homogeneous subordinators (i.e. non-decreasing additive
processes, which can be used as random-time) have been studied in
\cite{ORT}. Their  transition measure $\mu_{s,t}$  has Laplace transform
\begin{align*}
\int _{0}^\infty e^{-\eta z} \mu_{s,t}(dz)=   e^{-\int _s^t f(\eta, \tau) d\tau},\qquad \eta >0,
\end{align*}
where
$f(\eta,\tau)= \int _0^\infty (1-e^{-s\eta}) \nu _{\tau}(ds)$
is a Bernstein function in the variable $\eta$. A remarkable example is the
 multistable subordinator studied in \cite{MOL}, which corresponds to
 $f(\eta, \tau)= \eta ^{\alpha (\tau)}$, where $\alpha (\tau) \in (0,1)$.

In \cite{ORT} the authors considered propagators defined by the Bochner integral
\begin{equation}
\mathcal{T}_{s,t}f=\int_{0}^{\infty }T_{z}f\,\,\mu _{s,t}(dz),  \label{hhh}
\end{equation}%
where $T_{t}$ is a contraction semigroup (not necessarily associated to a
stochastic process) acting on a generic Banach space, and $\mu _{s,t}$ is
the increment law of a non-homogeneous subordinator. The operator (\ref{hhh}%
) is a subordinated propagator (not necessarily associated to a stochastic
process) acting on a generic Banach space. In \cite{ORT}, Theorem 4.1, the
form of the time-dependent generator of (\ref{hhh}) is found, by considering
an Hilbert space as domain of $\mathcal{T}_{s,t}$ and assuming $\mathcal{T}_{s,t}$
to be self-adjoint. This result generalizes
the Phillips theorem (see \cite{SAT}) holding for one-parameter subordinated
semigroups.

In view of what
follows, it is useful to prove a more general result which is valid for not
necessarily self-adjoint propagators.  The price we pay to make this
generalization is to narrow the attention to propagators
acting on
$\mathcal{S}(\mathbb{R})$ only. Moreover we only consider propagators
associated to time-changed  L\'{e}vy processes.

\begin{lemma}
Let $M$ be a L\'{e}vy process associated to the semigroup $T_{t}$ on $\mathcal{S}(\mathbb{R})$,
having characteristic function $\mathbb{E}%
e^{i\xi M(t)}=e^{t\eta (\xi)}$, $\xi \in \mathbb{R}$ and let $H$ be an inhomogeneous subordinator with
Laplace transform $\mathbb{E}e^{-p(H(t)-H(s))}=e^{-\int_{s}^{t}f(p,\tau
)d\tau }$, $p\in \mathbb{R}^+$. Then the additive process $M(H(t))$ has time-dependent generator
\begin{equation*}
\mathcal{L}_{t}h(x)=\int_{0}^{\infty
}(T_{z}h(x)-h(x))\nu _{t}(dz),\qquad h\in \mathcal{S}(\mathbb{R}).
\end{equation*}
\end{lemma}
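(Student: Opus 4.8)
The plan is to compute the time-dependent generator directly from its definition $\mathcal{L}_t h = \lim_{k\to 0^+}(\mathcal{T}_{t,t+k}h - h)/k$, working throughout on the Schwartz space $\mathcal{S}(\mathbb{R})$, where the pseudo-differential machinery of Lemma~\ref{lemma operatori pseudodifferenziali} applies cleanly. First I would record that the subordinated process $M(H(t))$ is additive with characteristic exponent $\eta(p,t)=f(-\eta_M(p),t)$, where $\eta_M$ is the characteristic exponent of $M$; this follows by conditioning on $H$ and using the two given transforms, exactly as in the homogeneous subordination formula, but with the time-integrated Laplace exponent $\int_s^t f(\cdot,\tau)\,d\tau$ in place of $tf(\cdot)$. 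The key structural fact I would then isolate is that the propagator $\mathcal{T}_{s,t}$ of $M(H(t))$ is the Bochner-type average
\begin{equation*}
\mathcal{T}_{s,t}h(x)=\int_0^\infty T_z h(x)\,\mu_{s,t}(dz),
\end{equation*}
where $\mu_{s,t}$ is the increment law of the subordinator $H$ and $T_z$ is the Lévy semigroup of $M$. This is the analogue of \eqref{hhh}, specialized to the case where $T_z$ comes from an actual Lévy process.

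Next I would insert this representation into the difference quotient at $s=t$, $s'=t+k$, and subtract $h$. Since $\mu_{t,t+k}$ is a probability measure, one can write $h(x)=\int_0^\infty h(x)\,\mu_{t,t+k}(dz)$, so the difference quotient becomes
\begin{equation*}
\frac{\mathcal{T}_{t,t+k}h(x)-h(x)}{k}=\int_0^\infty \bigl(T_z h(x)-h(x)\bigr)\frac{\mu_{t,t+k}(dz)}{k}.
\end{equation*}
The heart of the argument is to show that, as $k\to 0^+$, the measure $k^{-1}\mu_{t,t+k}(dz)$ converges (in the appropriate vague sense, tested against the function $z\mapsto T_zh(x)-h(x)$) to the local Lévy measure $\nu_t(dz)$ of the inhomogeneous subordinator. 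I would make this rigorous on the Fourier side: by Lemma~\ref{lemma operatori pseudodifferenziali}(ii), the generator of $M(H(t))$ at $t$ is $\widetilde{\mathcal{L}_t h}(p)=\eta(p,t)\tilde h(p)=f(-\eta_M(p),t)\,\tilde h(p)$, and using the Lévy–Khintchine form of the Bernstein function $f(\cdot,t)$, namely $f(q,t)=d_t q+\int_0^\infty(1-e^{-qs})\nu_t(ds)$ (with drift $d_t$, which vanishes here since $f(q,t)=\int_0^\infty(1-e^{-qs})\nu_t(ds)$ as given), substitute $q=-\eta_M(p)$ to get
\begin{equation*}
\eta(p,t)=\int_0^\infty\bigl(e^{s\eta_M(p)}-1\bigr)\nu_t(ds).
\end{equation*}
Recognizing $e^{s\eta_M(p)}=\widetilde{T_s h}(p)/\tilde h(p)$ and applying Fourier inversion then reproduces exactly the claimed operator $\mathcal{L}_t h(x)=\int_0^\infty(T_z h(x)-h(x))\nu_t(dz)$.

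I expect the main obstacle to be the justification of the interchange of limit and integral (equivalently, the vague convergence $k^{-1}\mu_{t,t+k}\to\nu_t$), which is precisely where the continuity of $f$ in $\tau$ enters: one needs $\int_t^{t+k}f(\cdot,\tau)\,d\tau\sim k\,f(\cdot,t)$ as $k\to 0^+$. The cleanest route is to argue entirely in Fourier variables and appeal to dominated convergence there, as in the proof of Lemma~\ref{lemma operatori pseudodifferenziali}(ii): the integrand $e^{ipx}k^{-1}(e^{\int_t^{t+k}\eta(p,\tau)d\tau}-1)\tilde h(p)$ is dominated, via the mean value theorem and the growth bound on $\eta$, by a Schwartz function, so the limit passes inside and yields $e^{ipx}\eta(p,t)\tilde h(p)$. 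The boundedness of $|\eta(p,t)|$ in $t$ and its continuity in $t$ — which follow from the corresponding hypotheses on $f$ together with the fixed exponent $\eta_M$ of the Lévy process $M$ — are exactly the conditions needed to invoke that lemma, so the bulk of the work reduces to verifying these hypotheses and then transcribing the Fourier-side identity back to the stated integro-differential form.
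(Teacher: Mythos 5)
Your proposal is correct and follows essentially the same route as the paper: condition on $H$ to identify the characteristic exponent of $M(H(t))$, apply Lemma \ref{lemma operatori pseudodifferenziali}(ii), insert the Bernstein representation of $f(\cdot,t)$ evaluated at $-\eta_M(p)$, and recognize $e^{s\eta_M(p)}\tilde{h}(p)$ as the Fourier transform of $T_s h$ before inverting. (Only watch the sign: the characteristic exponent is $-f(-\eta_M(p),t)$, not $f(-\eta_M(p),t)$ as written mid-argument, though your final displayed identity $\eta(p,t)=\int_0^\infty(e^{s\eta_M(p)}-1)\nu_t(ds)$ is the correct one.)
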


\begin{proof}
By a standard conditioning argument, we have
\begin{equation*}
\mathbb{E}e^{ipM(H(t))}=e^{-\int_{0}^{t}f(-\eta (p),\tau )d\tau }.
\end{equation*}%
It is now sufficient to apply Lemma \ref{lemma operatori pseudodifferenziali} to the
characteristic exponent $-f(-\eta (p),\tau )$. Indeed, by using expression
(\ref{rappresentazione pseudodifferenziale del generatore}), we have
\begin{align*}
\mathcal{L}_t h(x)= \frac{1}{\sqrt{2\pi}}\int _{\mathbb{R}}e^{ipx}
\biggl ( -f(-\eta (p),t) \biggr ) \tilde{h}(p)dp= \frac{1}{\sqrt{2\pi}}\int _{\mathbb{R}}e^{ipx}
 \biggl ( \int _0^\infty (e^{\eta (p)s}-1)\nu_t(ds) \biggr ) \tilde{h}(p)dp
\end{align*}
and recalling that
 \begin{align*}
 T_sh(x)= \frac{1}{\sqrt{2\pi}} \int _{\mathbb{R}}e^{ipx}e^{\eta (p)s}\tilde{h}(p)dp
 \end{align*}
  we obtain the result.
\end{proof}

\section{Fractional derivatives with time-varying parameters}

\subsection{Multistable processes and their generators}

The \emph{Riesz-Feller (RF) fractional
derivative}  is a pseudo-differential operator defined in \cite{MAI} by means of its
Fourier transform. For any $f\in S(\mathbb{R})$, the RF
fractional derivative $\mathcal{D}^{\alpha ,\theta }$ (with $\alpha \in
(0,2],$ and $|\theta |\leq \min \{\alpha ,2-\alpha \}$), is defined by
 \footnote{To avoid confusion with formulas reported in \cite{MAI}, we point out that
 our definition of Fourier transform of a function $h$ is $\hat{h}(\xi)= 1/\sqrt{2\pi }
 \int _{\mathbb{R}} e^{-i\xi x}h(x)dx$  ,   while the authors in \cite{MAI} define the
 Fourier transform as $\hat{h}_M(\xi)= \int _{\mathbb{R}}e^{i\xi x}h(x)dx$, hence we
 have $\hat{h}(\xi)=\hat{h}_M(-\xi)/\sqrt{2\pi}$.  Then, for
 $h(x)= \mathcal{D}^{\alpha, \theta}_xf(x)$, the authors in \cite{MAI}
 write $\hat{h}_M(\xi)=-|\xi|^\alpha e^{i sgn(\xi) \pi\theta /2}\hat{f}_M(\xi)$,
 which, according to our definition of Fourier transform, becomes
\begin{align*}
 \hat{h}(\xi)=- \frac{1}{\sqrt{2 \pi}}|\xi|^\alpha e^{-i sgn(\xi) \pi\theta /2}\hat{f}_M(-\xi)=- |\xi|^\alpha e^{-i sgn(\xi) \pi\theta /2}\hat{f}(\xi)
\end{align*}
  }

\begin{align}
\frac{1}{\sqrt{2\pi }}\int_{\mathbb{R}}e^{-i\xi x}\mathcal{D}_{x}^{\alpha
,\theta }f(x)dx=-|\xi |^{\alpha }e^{-i\,sign(\xi )\theta \pi /2}\tilde{f}(\xi
),  \label{uno}
\end{align}%
namely its "symbol" reads
\begin{equation}
\widehat{\mathcal{D}^{\alpha ,\theta }}(\xi )=-|\xi |^{\alpha }e^{-i\,sign(\xi )\theta \pi /2}.  \label{due}
\end{equation}%

The adjoint of $\mathcal{D}^{\alpha, \theta}$ is defined as the operator $\overline{\mathcal{D}^{\alpha, \theta}}$ such that
\begin{align}
\int _\mathbb{R} \mathcal{D}^{\alpha, \theta}_x f(x) \, g(x)dx= \int _{\mathbb{R}}f(x)
\overline{\mathcal{D}_x^{\alpha, \theta}} g(x)dx \qquad  \forall f,g\in \mathcal{S}
(\mathbb{R}) \label{formula di aggiunzione}
\end{align}
and it is easy
 to check that  $\overline{\mathcal{D}^{\alpha, \theta}}$ is such that
\begin{align}
\frac{1}{\sqrt{2\pi }}\int_{\mathbb{R}}e^{-i\xi x} \overline{\mathcal{D}_{x}^{\alpha
,\theta }}f(x)dx=-\psi _{\alpha, \theta}(\xi)\tilde{f}(\xi)=    -|\xi |^{\alpha }e^{i\,sign(\xi )\theta \pi /2}\tilde{f}(\xi
). \label{aggiunta della derivataa}
\end{align}
Indeed, by writing $\mathcal{D}^{\alpha, \theta}_x f(x$) and $\overline{\mathcal{D}_x^{\alpha, \theta}}
g(x)$ as the inverse Fourier transforms of (\ref{uno}) and (\ref{aggiunta della derivataa}),
we can verify that (\ref{formula di aggiunzione}) is satisfied.

By inserting (\ref{aggiunta della derivataa}) into (\ref{rappresentazione pseudodifferenziale del generatore})
(restricted to the time-homogeneous case), we see that the operator
 $\overline{\mathcal{D}^{\alpha, \theta}}$ is the generator of the
stable process $\mathcal{S}_{\alpha ,\theta }$. Therefore, if $T_t$
is the stable semigroup, we have that $q(x,t)=T_tf(x)= \mathbb{E}
(f(S_{\alpha, \theta}(t))|S_{\alpha, \theta}(0)=x)$ solves
\begin{align*}
\frac{\partial}{\partial t} q(x,t)= \overline{\mathcal{D}_x^{\alpha, \theta}} q(x,t) \qquad q(x,0)=f(x),
\end{align*}
while  the stable density $p_{\alpha, \theta}(x,y,t)$ (which obviously depends on
 $y-x$ only, since $S_{\alpha, \theta}$ is a L\'evy process) solves the backward equation
\begin{align*}
\frac{\partial}{\partial t} p_{\alpha, \theta}(x,y,t)=
\overline{\mathcal{D}_x^{\alpha, \theta}}p_{\alpha, \theta}(x,y,t) \qquad p_{\alpha,\theta}(x,y,0)= \delta (x-y).
\end{align*}
The picture is completed by the forward equation  (where the operator
 on the right-hand side acts on the forward variable $y$)
\begin{align*}
\frac{\partial}{\partial t} p_{\alpha, \theta}(x,y,t)=
\mathcal{D}_y^{\alpha, \theta}p_{\alpha, \theta}(x,y,t) \qquad p_{\alpha, \theta}(x,y,0)= \delta (x-y)
\end{align*}
which has been studied in  \cite{MAI} (although with a different notation and in a different setting).

 The previous facts can be extended to the case of time-varying fractional index $\alpha (t)$ and
 asymmetry parameter $\theta
(t)$, both assumed to be continuous. To this aim we give the following definition.

\begin{definition}
\label{def_1} (RF fractional derivative with varying parameters) Let $\,t\to \alpha (t)$ and
 $\,t\to \theta (t)$ be two continuous functions such that $\alpha
(t)\in (0,2]$ and $|\theta (t)|\leq \min \{\alpha (t),2-\alpha (t)\},$ $%
\forall t\geq 0$. We define $\mathcal{D}^{\alpha (t),\theta (t)} $ by
its Fourier transform%
\begin{equation}
\frac{1}{\sqrt{2\pi}}\int _{\mathbb{R}}e^{-i\xi x}\, \mathcal{D}_{x}^{\alpha
(t),\theta (t)} f(x)dx =-|\xi |^{\alpha (t) }e^{-i\,sign(\xi )\theta (t) \pi
/2} \tilde{f}(\xi), \qquad f\in S(\mathbb{R}),  \label{sym2}
\end{equation}%
for any $t\geq 0,$ so that its symbol can be written as%
\begin{equation}
\widehat{\mathcal{D}^{\alpha (t),\theta (t)}}(\xi )=-|\xi |^{\alpha (t)}e^{-i\,sign(\xi )\theta (t)\pi /2}.
\label{sym}
\end{equation}
\end{definition}

Thanks to a suitable regularization of hyper-singular integrals,
the fractional derivative $\mathcal{D}^{\alpha (t),\theta (t)} $
 defined above can be also represented as follows (see [\cite{MAI}, (2.8)]):

\begin{eqnarray}
\mathcal{D}_{x}^{\alpha (t),\theta (t)}u(x) &=&\frac{\Gamma (1+\alpha (t))}{\pi }\sin \left( \frac{\left[ \alpha
(t)+\theta (t)\right] \pi }{2}\right) \int_{0}^{+\infty }\frac{f(x+z)-f(x)}{%
z^{1+\alpha (t)}}dz  \label{rie} \\
&&+\frac{\Gamma (1+\alpha (t))}{\pi }\sin \left( \frac{\left[ \alpha
(t)-\theta (t)\right] \pi }{2}\right) \int_{0}^{+\infty }\frac{f(x-z)-f(x)}{%
z^{1+\alpha (t)}}dz, \notag
\end{eqnarray}

The adjoint of the time-varying RF fractional derivative, say $\overline{\mathcal{D}^{\alpha (t), \theta (t)}}$, is defined by
\begin{align}
\frac{1}{\sqrt{2\pi }}\int_{\mathbb{R}}e^{-i\xi x} \overline{\mathcal{D}_{x}^{\alpha (t)
,\theta (t) }}f(x)dx=-\psi _{\alpha (t), \theta (t)}(\xi)\tilde{f}(\xi)=    -|\xi |^{\alpha (t) }e^{i\,sign(\xi )\theta (t) \pi /2}\tilde{f}(\xi
). \label{aggiunta della derivata}
\end{align}
 By (\ref{rappresentazione pseudodifferenziale del generatore}),
 $\overline{\mathcal{D}_{x}^{\alpha (t),\theta (t) }}$ is the time-dependent generator of the so-called
multistable process $\mathcal{S}_{\alpha ,\theta }^{I}$ studied in \cite{LEG} and \cite{FAL}, since
its characteristic function reads
\begin{equation}
\mathbb{E}e^{i\xi S_{\alpha ,\theta }^{I}(t)}=\exp \left\{ -\int_{0}^{t}\psi
_{\alpha (s),\theta (s)}(\xi )ds\right\}.  \label{td}
\end{equation}

In the special case where $\alpha (t)\leq 1$ and $\theta (t)=-\alpha (t)$
for each $t\geq 0$, the process $S_{\alpha ,\theta }^{I}$ coincides with the
multistable subordinator given in \cite{MOL} and the generator reduces to the%
\textit{\ variable order (left-sided) Riemann-Liouville derivative},
\begin{equation}
\overline{\mathcal{D}_{x}^{\alpha (t),-\alpha (t)}}f(x):=\left\{
\begin{array}{l}
\frac{(-1)}{\Gamma (1-\alpha (t))}\frac{d}{dx}\int_{x}^{+\infty }\frac{f(z)}{%
(x-z)^{\alpha (t)}}dz,\quad \alpha (t)\in (0,1) \\
\frac{d}{dx}f(x),\quad \alpha (t)=1, \quad \forall t,%
\end{array}%
\right.  \label{crr}
\end{equation}%
with symbol $-(-i\xi )^{\alpha (t)}$.

Finally, in the symmetric case $\theta (t) =0 \,\, \forall t>0$,  the propagator
 is self-adjoint (in agreement to Lemma \ref{operatori simmetrici}), and its
 generator is given by the self-adjoint RF derivative (which is also known
 as Riesz derivative) $\mathcal{D}^{\alpha (t),0}= \overline{\mathcal{D}^{\alpha (t),0}}$,
 with symbol $-|\xi|^{\alpha (t)}$, having the following representation

\begin{align}
\mathcal{D}_{x}^{\alpha (t),0}f(x) =  \begin{cases}   \frac{\Gamma (1+\alpha (t))}{\pi }\sin \left( \frac{ \alpha
(t) \pi }{2}\right)  \int_{0}^{+\infty }\frac{f(x+z)+f(x-z)-2f(x)}{%
z^{1+\alpha (t)}}dz
 \qquad \alpha (t)\neq 2 \\ \frac{d^2}{dx^2}f(x) \qquad \alpha (t)=2 \label{derivative}
\end{cases}
\end{align}

\subsection{Fractional logarithmic operator with time-varying parameters}

By the Phillips' theorem, the classical Geometric Stable process $S_{\alpha
,\theta }(\Gamma )$ has generator
\begin{equation*}
\mathcal{G}_{b}^{\alpha ,\theta }u=\int_{0}^{\infty }(T_{s}u-u)\,\frac{%
e^{-s/b}}{s}ds
\end{equation*}%
where $T_{t}$ is the semigroup associated to $S_{\alpha ,\theta }$. Since
the characteristic function reads
\begin{equation*}
\mathbb{E}e^{i\xi S_{\alpha ,\theta }(\Gamma (t))}=exp\{-t\ln (1+b\psi
_{\alpha ,\theta }(\xi ))\},
\end{equation*}%
where $-\psi_{\alpha, \theta}(\xi)=-
|\xi|^{\alpha}e^{i\frac{\pi}{2} \theta sgn (\xi)}$ is the symbol
of $\overline{\mathcal{D}^{\alpha, \theta}}$, then, in the spirit
of operational functional calculus, we wonder if the generator can
be also written in the form of the fractional operator
\begin{equation}
\mathcal{G}_{b}^{\alpha ,\theta }=-\ln (1-b\overline{\mathcal{D}^{\alpha ,\theta }}).
\label{frac logaritmic operator}
\end{equation}%
But, in order to give sense to (\ref{frac logaritmic operator}), we need a
broader discussion, which regards a large class of subordinated semigroups.
Let $T_{t}$ be a contraction semigroup generated by $\mathcal{A}$ and let $%
T_{t}^{f}$ be the time changed semigroup, where $f(x)= \int _0^\infty (1-e^{-sx})\nu (ds)$ is the Bernstein
function of the underlying subordinator with L\'{e}vy measure $\nu $.
The Phillips' theorem states that the generator of $T_{t}^{f}$ is
\begin{equation*}
\mathcal{A}^{f}u=\int_{0}^{\infty }(T_{s}u-u)\nu (ds).
\end{equation*}%
where $Dom(\mathcal{A})\subset Dom(\mathcal{A}^{f})$. We now
wonder whether it is possible to write
$\mathcal{A}^{f}=-f(-\mathcal{A})$ in the sense of operational
functional calculus. According to \cite{Schilling 1}, the answer
is affirmative if $f$ is a complete Bernstein function.

We recall that a function $f$ on $(0,\infty)$ is said to be a complete Bernstein function if and only
if has the following analytic continuation on the upper complex half-plane
\begin{equation*}
f(z)=a+bz+\int_{0}^{\infty }\frac{z}{z+\tau }\,\sigma (d\tau ), \qquad Im\, z>0,
\end{equation*}%
for suitable constants $a,b\geq 0$ and a suitable measure $\sigma $ such that $\int_{0}^{\infty }\frac{\sigma
(d\tau )}{1+\tau }<\infty $ (for further details see \cite{Schilling 2},
ch.6).

By taking $a=0$, $b=0$ and $\sigma (d\tau )=\frac{d\tau }{\tau }1_{[b^{-1},\infty ]}$, we
have that $z\rightarrow \ln (1+bz)$ is a complete Bernstein function with
the following representation
\begin{equation*}
\ln (1+bz)=\int_{b^{-1}}^{\infty }\frac{z}{\tau (\tau +z)}d\tau.
\end{equation*}%
Hence, by using \cite{Schilling 1}, p.455, we obtain a nice integral representation
of $\mathcal{G}_b^{\alpha ,\theta }$ involving the adjoint of the RF derivative $\overline{\mathcal{D}
^{\alpha ,\theta }}$ and its resolvent $\mathcal{R}_{\tau }:=(\tau -\overline{\mathcal{D
}^{\alpha ,\theta }})^{-1}$ only:
\begin{equation}
\mathcal{G}_b^{\alpha ,\theta }=-\ln (1-b\overline{\mathcal{D}^{\alpha ,\theta
}})= \int_{b^{-1}}^{\infty }\frac{1}{\tau }\overline{\mathcal{D}^{\alpha ,\theta }}(\tau -
\overline{\mathcal{D}^{\alpha ,\theta }})^{-1}d\tau  \label{generatore del geometric stable}
\end{equation}%
which is valid on $Dom(\overline{\mathcal{D}^{\alpha ,\theta }})$. In the spirit of \cite%
{BEG}, \cite{BEG2} and \cite{BEG3}, we call (\ref{generatore del geometric stable}) \textit{%
fractional logarithmic operator}. Analogously, by considering a
generic semigroup $T_t$ generated by $\mathcal{A}$, subordination
to Gamma process produces the new generator
\begin{align}
-\ln (1-b\mathcal{A})=\int_{b^{-1}}^{\infty }\frac{1}{\tau }\mathcal{A} (\tau -%
\mathcal{A})^{-1}d\tau. \label{generatore logaritmico}
\end{align}

\begin{remark}

In order to strengthen the connection to fractional calculus, we observe
that for functions $f\in S(\mathbb{R})$ such that $\tilde{f}(\xi )$ is
compactly supported in $|\xi |<1/b^{\frac{1}{\alpha }}$, the symbol of the
generator can be expanded as
\begin{equation*}
\widehat{\mathcal{G}_{b}^{\alpha ,\theta }(\xi )}\tilde{f}(\xi )=-\ln (1+b\psi
_{\alpha ,\theta }(\xi ))\tilde{f}(\xi )=\sum_{n=1}^{\infty }\frac{%
(-1)^{n}b^{n}}{n}\psi _{\alpha ,\theta }^{n}(\xi )\tilde{f}(\xi ).
\end{equation*}%
Therefore $\mathcal{G}_{b}^{\alpha ,\theta }$ has the form of a powers' series
of fractional operators, i.e.
\begin{equation}
\mathcal{G}_{b}^{\alpha ,\theta }f(x)=\sum_{n=1}^{\infty }\frac{b^{n}}{n}%
\underbrace{\overline{\mathcal{D}_x^{\alpha ,\theta }}...\overline{\mathcal{D}_x^{\alpha
,\theta }}}_{n-times}f(x) \label{sviluppo in serie}
\end{equation}%
provided that the series converges uniformly. Regarding formula (\ref{sviluppo in
serie}), it must be taken into account that, for any $f\in \mathcal{S}(\mathbb{R})$, we
have
\begin{equation*}
\underbrace{\overline{\mathcal{D}^{\alpha ,\theta }}...\overline{\mathcal{D}^{\alpha
,\theta }}}_{j-times}f\in \mathcal{S}(\mathbb{R}),
\end{equation*}%
for any $j\in \mathbb{N}$. Formula (\ref{sviluppo in serie}) can be checked by
observing that
\begin{eqnarray*}
&&\frac{1}{\sqrt{2 \pi}}\int_{-\infty }^{+\infty }e^{-i\xi x}\sum_{n=1}^{\infty }\frac{b^{n}}{n}%
\underbrace{\overline{\mathcal{D}_x^{\alpha ,\theta }}...\overline{\mathcal{D}_x^{\alpha
,\theta }}}_{n-times}f(x)dx \\
&=&\frac{1}{\sqrt{2 \pi}}\sum_{n=1}^{\infty }\frac{b^{n}}{n}\int_{-\infty }^{+\infty }e^{-i\xi x}%
\underbrace{\overline{\mathcal{D}_x^{\alpha ,\theta}}...\overline{\mathcal{D}_x^{\alpha
,\theta }}}_{n-times}f(x)dx \\
&=&[\text{by (\ref{aggiunta della derivata})}] \\
&=&-\frac{1}{\sqrt{2 \pi}}\sum_{n=1}^{\infty }\frac{b^{n}}{n}\psi _{\alpha ,\theta }(\xi )\tilde{f}%
(\xi )\int_{-\infty }^{+\infty }e^{-i\xi x}\underbrace{\overline{\mathcal{D}_x%
^{\alpha ,\theta }}...\overline{\mathcal{D}_x^{\alpha ,\theta }}}%
_{(n-1)-times}f(x)dx \\
&=&\sum_{n=1}^{\infty }\frac{(-b)^{n}}{n}\psi _{\alpha ,\theta }(\xi )^{n}%
\tilde{f}(\xi )^{n}.
\end{eqnarray*}
\end{remark}

In the following sections, we study propagators generated by two possible
time-inhomogeneous extensions of (\ref{generatore del geometric stable}). The
first one is obtained by letting $\alpha $ and $\theta $ depend on $t$, as
follows.

\begin{definition}
\label{def_2}  Let $\,t\to \alpha (t)$ and $\,t\to \theta (t)$ be two continuous
functions such that $\alpha (t)\in (0,2]$ and $|\theta
(t)|\leq \min \{\alpha (t),2-\alpha (t)\}$. Then, for any $b>0,$ $t\geq 0,$%
\begin{equation}
\mathcal{P}_{b}^{\alpha (t),\theta (t)}:=-\ln (1-b\overline{\mathcal{D}^{\alpha
(t),\theta (t)}})= \int_{b^{-1}}^{\infty }\frac{1}{z} \overline{\mathcal{D}^{\alpha
(t),\theta (t)}}(z-\overline{\mathcal{D}^{\alpha (t),\theta (t)}})^{-1}dz.
\label{fr2}
\end{equation}%
\end{definition}
In the second case, instead, we allow  the parameter $b$ to be
time-dependent and thus we use the following operator.
\begin{definition}
\label{def_2bis}  Let
 $t\to b(t)$ be a continuous function such that $b(t)>0$ for any $t>0$. Moreover  let $\alpha \in (0,2],\;|\theta
|\leq \min \{\alpha ,2-\alpha \}$. Then, for any  $t\geq 0,$%
\begin{equation}
\mathcal{P}_{b(t)}^{\alpha ,\theta }:=-\ln (1-b(t)\overline{\mathcal{D}^{\alpha
,\theta }})=\int_{b(t)^{-1}}^{\infty }\frac{1}{z}\overline{\mathcal{D}^{\alpha
,\theta} }(z-\overline{\mathcal{D}^{\alpha ,\theta }})^{-1}dz. \label{shilling}
\end{equation}
\end{definition}
The symbols of the two operators defined above are
\begin{equation}
\widehat{\mathcal{P}_{b}^{\alpha (t),\theta (t)}}(\xi)=-\ln (1+b\psi _{\alpha(t) ,\theta(t) }(\xi )) \label{bla}
\end{equation}
and
\begin{equation}
\widehat{\mathcal{P}_{b(t)}^{\alpha,\theta}}(\xi)=-\ln (1+b(t)\psi _{\alpha ,\theta }(\xi ))\label{qua}
\end{equation}
respectively.\\

\section{First-type inhomogeneous GS process}

A natural way of defining a non-homogeneous generalization of the GS
process is by considering a time-varying
fractional index $\alpha (t),$ with $\alpha (t)\in (0,2],$ $\alpha (t)\neq
1, $ for any $t\geq 0,$ similarly to what is done for defining   the multistable
process in \cite{LEG}. This will influence the thickness of the  tails, which are
 no more  power-law as in the homogeneous case (see \cite{KOZ5}).  We also let
 the asymmetry parameter $\theta $ vary with $t$,
under the assumption that $|\theta (t)|\leq \min \{\alpha (t),2-\alpha
(t)\}, $ $\forall t\geq 0.$ By assigning the two functions $t \to \alpha (t)$
and $t\to \theta (t)$ with the above properties, and assuming their continuity,
 we have the following:

\begin{definition}
\label{def_3} \textbf{(Inhomogeneous GS process - I) }The process $G_{\alpha
,\theta }^{I}:=\left\{ G_{\alpha ,\theta }^{I}(t),t\geq 0\right\} $ is
defined by the following joint characteristic function:%
\begin{equation}
\mathbb{E}e^{i\sum_{j=1}^{d}\xi _{j}G_{\alpha ,\theta }^{I}(t_{j})}:=\exp
\left\{ -\int_{\mathbb{R}}\ln \left( 1+b\psi _{\alpha (z),\theta (z)}\left(
\sum_{j=1}^{d}\xi _{j}1_{[0,t_{j}]}(z)\right) \right) dz\right\} ,
\label{gss}
\end{equation}%
for $\xi _{j}\in \mathbb{R}$, $t_{j}\geq 0,\,d\in \mathbb{N}.$
\end{definition}

The process defined above (which, for brevity, we will call $GS^I$),
 is an additive process, as can be checked by
comparing (\ref{gss}) with (\ref{chh}) and taking into account the symbol of
the usual GS given in (\ref{ss}).

\begin{remark}
For the reader's convenience, we present the two main special cases of (\ref%
{gss}), i.e.

i) for $\alpha (t)\in (0,1)$, $\theta (t)=\pm \alpha (t)$ (completely
asymmetric case)%
\begin{equation}
\mathbb{E}e^{i\sum_{j=1}^{d}\xi _{j}G_{\alpha ,\pm \alpha }^{I}(t_{j})}:=\exp
\left\{ -\int_{\mathbb{R}}\ln \left( 1+b(\pm i)^{\alpha (z)}\biggl (
\sum_{j=1}^{d}\xi _{j}1_{[0,t_{j}]}(z)\biggr ) ^{\alpha (z)}\right)
dz\right\} .  \label{ii}
\end{equation}%
ii) for $\alpha (t)\in (0,2),$ with $\alpha (t)\neq 1$, and $\theta (t)=0$
(symmetric case)%
\begin{equation}
\mathbb{E}e^{i\sum_{j=1}^{d}\xi _{j}G_{\alpha ,0}^{I}(t_{j})}:=\exp \left\{
-\int_{\mathbb{R}}\ln \left( 1+b\left\vert \sum_{j=1}^{d}\xi
_{j}1_{[0,t_{j}]}(z)\right\vert ^{\alpha (z)}\right) dz\right\} .
\label{gs2}
\end{equation}%
It is easy to check the independence of increments: indeed, in the first
case, for $\theta (t)=\pm \alpha (t)$, we have, from (\ref{ii}), that%
\begin{eqnarray*}
\mathbb{E}e^{i\xi \lbrack G_{\alpha ,\pm \alpha }^{I}(t_{2})-G_{\alpha ,\pm
\alpha }^{I}(t_{1})]} &=&\exp \left\{ -\int_{\mathbb{R}}\ln \left( 1+b(\pm
i)^{\alpha (z)} \xi  ^{\alpha (z)}\left(
1_{[0,t_{2}]}(z)-1_{[0,t_{1}]}(z)\right) ^{\alpha (z)}\right) dz\right\} \\
&=&\exp \left\{ -\int_{\mathbb{R}}\ln \left( 1+b(\pm i)^{\alpha
(z)} \xi  ^{\alpha (z)}\left(
1_{[t_{1},t_{2}]}(z)\right) ^{\alpha (z)}\right) dz\right\} \\
&=&\exp \left\{ -\int_{t_{1}}^{t_{2}}\ln \left( 1+b(\pm i)^{\alpha
(z)} \xi  ^{\alpha (z)}\right) dz\right\}
\end{eqnarray*}%
and thus%
\begin{eqnarray*}
\mathbb{E}e^{i\xi G_{\alpha ,\pm \alpha }^{I}(t_{2})} &=&\exp \left\{
-\int_{t_{1}}^{t_{2}}\ln \left( 1+b(\pm i)^{\alpha (z)} \xi
 ^{\alpha (z)}\right) dz-\int_{0}^{t_{1}}\ln \left( 1+b(\pm
i)^{\alpha (z)} \xi  ^{\alpha (z)}\right) dz\right\} \\
&=&\mathbb{E}e^{i\xi \lbrack G_{\alpha ,\pm \alpha }^{I}(t_{2})-G_{\alpha
,-\alpha }^{I}(t_{1})]}\mathbb{E}e^{i\xi G_{\alpha ,\pm \alpha }^{I}(t_{1})}.
\end{eqnarray*}%
A similar check can be done for $\theta (t)=0.$
\end{remark}

\begin{theorem}
\label{th 1} Let \ $f\in \mathcal{S}(\mathbb{R})$ and let $\mathcal{P}%
_{b}^{\alpha (t),\theta (t)}$ be the operator defined in Def.\ref{def_2},
 then, for any $b>0,$ $t\geq s,$ the following initial value problem%
\begin{equation}
\left\{
\begin{array}{l}
\frac{\partial }{\partial t}u(x,t)=\mathcal{P}_{b,x}^{\alpha (t),\theta
(t)}u(x,t) \\
u(x,s)=f(x)%
\end{array}%
\right.  \label{gs3}
\end{equation}%
is satisfied by $T_{s,t}^{G_{\alpha
,\theta }^{I}}f(x):=\mathbb{E}\left[ \left. f\left( G_{\alpha ,\theta
}^{I}(t)\right) \right\vert G_{\alpha ,\theta }^{I}(s)=x\right] $.
\end{theorem}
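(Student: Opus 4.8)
The plan is to identify $G_{\alpha,\theta}^{I}$ as an additive process and then read off its time-dependent generator from Lemma \ref{lemma operatori pseudodifferenziali}. Comparing the joint characteristic function (\ref{gss}) with the general form (\ref{chh}), one sees that $G_{\alpha,\theta}^{I}$ is additive with characteristic exponent
\[
\eta(p,t)=-\ln\bigl(1+b\psi_{\alpha(t),\theta(t)}(p)\bigr),
\]
which is precisely the symbol (\ref{bla}) of $\mathcal{P}_{b}^{\alpha(t),\theta(t)}$. Before applying the Lemma I would verify its hypotheses: continuity of $\eta(p,t)$ in $t$ follows from the assumed continuity of $t\mapsto\alpha(t)$ and $t\mapsto\theta(t)$, while the boundedness of $|\eta(p,t)|$ in $t$ (on the compact interval $[s,t]$ relevant here) follows because, for fixed $p$, the quantity $\psi_{\alpha(t),\theta(t)}(p)$ stays in a bounded region as the parameters range over a compact set.

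Once the Lemma applies, part (i) gives the representation
\[
u(x,t)=T_{s,t}^{G_{\alpha,\theta}^{I}}f(x)=\frac{1}{\sqrt{2\pi}}\int_{\mathbb{R}}e^{ipx}\,e^{\int_{s}^{t}\eta(p,\tau)d\tau}\,\tilde{f}(p)\,dp.
\]
Differentiating under the integral sign in $t$ then produces
\[
\frac{\partial}{\partial t}u(x,t)=\frac{1}{\sqrt{2\pi}}\int_{\mathbb{R}}e^{ipx}\,\eta(p,t)\,e^{\int_{s}^{t}\eta(p,\tau)d\tau}\,\tilde{f}(p)\,dp.
\]
Since $e^{\int_{s}^{t}\eta(p,\tau)d\tau}\tilde{f}(p)$ is exactly the Fourier transform of $u(\cdot,t)$, and since $\eta(p,t)$ is the symbol of $\mathcal{P}_{b}^{\alpha(t),\theta(t)}$ by (\ref{bla}), the right-hand side is $\mathcal{P}_{b,x}^{\alpha(t),\theta(t)}u(x,t)$ acting in the spatial variable. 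The initial condition is immediate, since at $t=s$ the inner integral vanishes and Fourier inversion returns $f$.

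The heart of the argument is justifying the interchange of $\partial_{t}$ and $\int_{\mathbb{R}}$, which I would carry out by dominated convergence exactly as in the proof of Lemma \ref{lemma operatori pseudodifferenziali}(ii): writing the difference quotient and applying the mean value theorem to $h\mapsto e^{\int_{t}^{t+h}\eta(p,\tau)d\tau}$, the integrand is controlled by $\bigl|\eta(p,\tau^{\ast})\,e^{\int_{s}^{\tau^{\ast}}\eta(p,\tau)d\tau}\,\tilde{f}(p)\bigr|$ for an intermediate $\tau^{\ast}$. The key observation making the domination work is that $\mathrm{Re}\,\eta(p,t)\le 0$: indeed $|\theta(t)|\le\min\{\alpha(t),2-\alpha(t)\}\le 1$ forces $\cos(\pi\theta(t)/2)\ge 0$, hence $\mathrm{Re}\,\psi_{\alpha(t),\theta(t)}(p)\ge 0$, so $|1+b\psi_{\alpha(t),\theta(t)}(p)|\ge 1$ and $|e^{\int_{s}^{t}\eta(p,\tau)d\tau}|\le 1$. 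Thus the integrand is dominated by $|\eta(p,t)\tilde{f}(p)|$ uniformly in $t$, and since $\eta$ grows only logarithmically in $|p|$ while $\tilde{f}$ is a Schwartz function, this bound is integrable; dominated convergence then legitimizes both the differentiation and the identification of the limit. This domination step is the only genuine obstacle, as everything else reduces to a direct application of the machinery already established in Section 2.
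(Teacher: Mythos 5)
Your proof is correct and follows essentially the same route as the paper's: both identify $-\ln\bigl(1+b\psi_{\alpha(t),\theta(t)}(\xi)\bigr)$ as the characteristic exponent of $G^{I}_{\alpha,\theta}$, match it with the symbol (\ref{bla}), and invoke the pseudo-differential representation of Lemma \ref{lemma operatori pseudodifferenziali}. The extra work you carry out --- verifying the lemma's hypotheses, the bound $\mathrm{Re}\,\eta(p,t)\le 0$ underlying the dominated-convergence step, and the fact that arguing in Fourier space sidesteps the non-commutation of $T_{s,t}$ with $\mathcal{A}_{t}$ --- is detail the paper delegates to the proof of Lemma \ref{lemma operatori pseudodifferenziali}(ii) and to Remark \ref{richiamo}.
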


\begin{proof}
The
process $G_{\alpha ,\theta }^{I}:=\left\{ G_{\alpha ,\theta }^{I}(t),t\geq
0\right\}$ has characteristic function
\begin{equation}
\mathbb{E}e^{i\xi G_{\alpha ,\theta }^{I}(t)}=\exp \left\{ -\int_{0}^{t}\ln
\left( 1+b\psi _{\alpha (z),\theta (z)}(\xi )\right) dz\right\} ,\qquad \xi
\in \mathbb{R}\text{, }t\geq 0,\,b>0.  \label{gs4}
\end{equation}
By taking the Fourier transform with respect to $x$, we can rewrite the
first equation in (\ref{gs3}) as follows:%
\begin{eqnarray*}
\frac{\partial }{\partial t}\widehat{u}(\xi ,t) &=&\widehat{\mathcal{P}%
_{b}^{\alpha (t),\theta (t)}}(\xi )\widehat{u}(\xi ,t) \\
&=&[\text{by (\ref{bla})}] \\
&=&-\ln (1+b\psi _{\alpha (t),\theta (t)}(\xi ))\widehat{u}(\xi ,t),
\end{eqnarray*}%
and Lemma \ref{lemma operatori pseudodifferenziali} (ii) gives the result.
\end{proof}

\begin{remark} \label{richiamo}
One could observe that (\ref{gs3})   does not apparently  coincide with the so called "forward"
equation for propagators  (consult, for example, \cite{RUS})
Indeed, by a simple calculation, we have
\begin{align*}
\frac{d}{dt} T_{s,t}f= \lim _{h\to 0^+} \frac{T_{s,t+h}-T_{s,t}}{h}f= \lim _{h\to 0^+}
\frac{T_{s,t}T_{t,t+h}-T_{s,t}}{h}f= \lim _{h\to 0^+} T_{s,t} \frac{T_{t,t+h}-I}{h}f= T_{s,t}\mathcal{A}_tf
\end{align*}
Since, in general, $T_{s,t}$ and $\mathcal{A}_t$ do not commute, it is not true
that $u(x,t)=T_{s,t}f(x)$ solves the non autonomous equation
$ \frac{d}{dt}u(x,t)= \mathcal{A}_tu(x,t)$ under $u(x,s)=f(x)$. However,
in our case, $f$ lives in the Schwartz space, and it is easy to
check that $T_{s,t}$ and $\mathcal{A}_t$ (which are given by
(\ref{rappresentazione pseudodifferenziale del propagatore})
and (\ref{rappresentazione pseudodifferenziale del generatore})), commute.
\end{remark}

\subsubsection{On the subordinator}
In the special case where $\theta (t)=-\alpha (t)$ and $\alpha (t)\in (0,1)$
for any $t>0$, we can easily evaluate the L\'{e}vy measure of the process $G_{\alpha ,-\alpha }^{I}$,
which is a inhomogeneous subordinator in the sense of \cite{ORT}.
Recall that $\psi _{\alpha
(t),-\alpha (t)}(\xi )=(-i\xi )^{\alpha (t)}$ and thus the Laplace transform
of $G_{\alpha ,-\alpha }^{I}$ can be written as follows%
\begin{equation}
\mathbb{E}e^{-\lambda G^I_{\alpha, -\alpha}(t)} =exp \biggl ( -\int_{0}^{t}\ln (1+b\lambda
^{\alpha (s)})ds \biggr ) ,\qquad \lambda >0.  \label{pr}
\end{equation}%
The integral in (\ref{pr}) is finite, since%
\begin{equation*}
\int_{0}^{t}\ln (1+b\lambda ^{\alpha (s)})ds\leq b\int_{0}^{t}\lambda
^{\alpha (s)}ds<\infty ,
\end{equation*}%
for $\alpha (s)\in (0,1),$ for any $s.$

\begin{lemma}
\label{lm_1} The time dependent L\'{e}vy measure of $G_{\alpha ,-\alpha }^{I}$ is given by%
\begin{equation}
\nu _{t}^{G_{\alpha ,-\alpha }^{I}}(dx)=x^{-1}\alpha (t)E_{\alpha
(t)}(-x^{\alpha (t)}/b)dx,  \label{lm}
\end{equation}%
where, for any $s\geq 0$,  $E_{\alpha (s)}(x)$ denotes the Mittag-Leffler function $E_{\alpha
(s)}(x)=\sum_{j=0}^{\infty }\frac{x^{j}}{\Gamma (\alpha (s)j+1)}.$
\end{lemma}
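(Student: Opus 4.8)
The plan is to read off the local Lévy measure from the Bernstein function and then verify the proposed density by a Laplace-transform computation. From the representation (\ref{pr}) together with the framework for inhomogeneous subordinators recalled in Section 2.2, the local Bernstein function of $G^I_{\alpha,-\alpha}$ is $f(\lambda,t)=\ln(1+b\lambda^{\alpha(t)})$, and the time-dependent Lévy measure $\nu_t$ is the unique measure satisfying
\[
\int_0^\infty\bigl(1-e^{-\lambda x}\bigr)\,\nu_t(dx)=\ln\bigl(1+b\lambda^{\alpha(t)}\bigr),\qquad \lambda>0.
\]
Thus it suffices to check that the candidate density $g_t(x):=x^{-1}\alpha(t)E_{\alpha(t)}(-x^{\alpha(t)}/b)$ reproduces the right-hand side; uniqueness of the Lévy measure in the Bernstein representation (no drift, since $f(\lambda,t)/\lambda\to0$, and no killing, since $f(0,t)=0$) then forces (\ref{lm}).

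Fix $t$ and abbreviate $\alpha=\alpha(t)\in(0,1)$, so that $g_t$ depends on the constants $\alpha$ and $b$ only and the task reduces to the homogeneous one. First I would settle integrability so that $\nu_t$ is a genuine Lévy measure of subordinator type. As $x\to0^+$ one has $E_\alpha(-x^\alpha/b)\to1$, whence $(x\wedge1)g_t(x)\sim\alpha$ is integrable near the origin; while the large-argument asymptotics of the Mittag-Leffler function, $E_\alpha(-x^\alpha/b)\sim \frac{b}{\Gamma(1-\alpha)}x^{-\alpha}$ as $x\to\infty$, give $g_t(x)\sim\frac{\alpha b}{\Gamma(1-\alpha)}x^{-1-\alpha}$, integrable against $x\wedge1$ at infinity.

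For the identity itself I would differentiate in $\lambda$. Both $\int_0^\infty(1-e^{-\lambda x})g_t(x)\,dx$ and $\ln(1+b\lambda^\alpha)$ vanish at $\lambda=0$, so it is enough to match their derivatives. Differentiation under the integral sign (justified by dominated convergence, using the bounds just obtained) yields
\[
\frac{d}{d\lambda}\int_0^\infty\bigl(1-e^{-\lambda x}\bigr)g_t(x)\,dx=\alpha\int_0^\infty e^{-\lambda x}E_\alpha(-x^\alpha/b)\,dx,
\]
and here the classical Laplace transform of the Mittag-Leffler function,
\[
\int_0^\infty e^{-\lambda x}E_\alpha(-c\,x^\alpha)\,dx=\frac{\lambda^{\alpha-1}}{\lambda^\alpha+c},\qquad c=1/b,
\]
gives exactly $\frac{b\alpha\lambda^{\alpha-1}}{1+b\lambda^\alpha}=\frac{d}{d\lambda}\ln(1+b\lambda^\alpha)$. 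Integrating back from $0$ establishes the required identity for every fixed $t$, and hence (\ref{lm}).

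The main obstacle is analytic bookkeeping rather than any conceptual difficulty: one must justify the interchange of differentiation and integration near the $x^{-1}$ singularity at the origin (where it is tamed by the factor $1-e^{-\lambda x}=O(\lambda x)$) and invoke the Mittag-Leffler Laplace transform, which itself follows from term-by-term integration of the defining series $E_\alpha(z)=\sum_{j}z^{j}/\Gamma(\alpha j+1)$. An alternative route, equally viable, exploits the subordination $G_{\alpha,-\alpha}=S_{\alpha,-\alpha}(\Gamma)$ from (\ref{sub}): the Lévy measure of a subordinated subordinator is $\nu_t(dx)=\int_0^\infty p^{S_\alpha}_s(x)\,s^{-1}e^{-s/b}\,ds\,dx$, and evaluating this against the stable density again produces the Mittag-Leffler density; but the direct Bernstein verification above is shorter and stays within the inhomogeneous-subordinator framework already set up.
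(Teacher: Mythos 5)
Your proposal is correct and follows essentially the same route as the paper: both verify that the candidate density reproduces $\ln(1+b\lambda^{\alpha(t)})$ via the Laplace transform $\int_0^\infty e^{-zx}E_{\alpha(t)}(-x^{\alpha(t)}/b)\,dx=\frac{bz^{\alpha(t)-1}}{1+bz^{\alpha(t)}}$, and both check the Lévy-measure integrability condition from the Mittag--Leffler asymptotics. Your differentiation in $\lambda$ followed by integrating back from $0$ is just the paper's step of writing $\frac{1-e^{-\lambda x}}{x}=\int_0^\lambda e^{-zx}\,dz$ and applying Fubini, so the two arguments coincide.
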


\begin{proof}
From (\ref{lm}), by applying formula (1.9.13), p.47 in \cite{KIL}, for any
fixed $t,$ we can write%
\begin{eqnarray*}
&&\int_{0}^{+\infty }(e^{-\lambda x}-1)\nu _{t}^{G_{\alpha ,-\alpha
}^{I}}(x)dx \\
&=-&\alpha (t)\int_{0}^{+\infty }\left( \int_{0}^{\lambda }e^{-zx}dz\right)
E_{\alpha (t)}(-x^{\alpha (t)}/b)dx \\
&=-&\alpha (t)\int_{0}^{\lambda }\left( \int_{0}^{+\infty }e^{-zx}E_{\alpha
(t)}(-x^{\alpha (t)}/b)dx\right) dz \\
&=-&b\alpha (t)\int_{0}^{\lambda }\frac{z^{\alpha (t)-1}}{bz^{\alpha (t)}+1}dz\\
&=&-\ln (1+b\lambda ^{\alpha (t)}),
\end{eqnarray*}%
which agrees with (\ref{pr}). We now check that the condition $%
\int_{0}^{+\infty }\frac{x}{1+x}\nu _{t}^{G_{\alpha ,-\alpha
}^{I}}(x)dx<\infty $ is satisfied, as follows%
\begin{eqnarray*}
&&\int_{0}^{+\infty }\frac{x}{1+x}\nu _{t}^{\mathcal{G}_{\alpha ,-\alpha
}^{I}}(x)dx \\
&=&\alpha (t)\int_{0}^{+\infty }\frac{1}{1+x}E_{\alpha (t)}(-x^{\alpha
(t)}/b)dx \\
&=&[\text{by (3.4.30) in \cite{GOR}}] \\
&\leq &b\alpha (t)M^{+}\int_{0}^{+\infty }\frac{1}{1+x}\frac{1}{b+x^{\alpha
(t)}}dx <\infty ,
\end{eqnarray*}%
where $M^{+}$ is a positive constant.
\end{proof}

It is easy to check that, letting $\alpha (t)\rightarrow 1$, for any $t,$ we
obtain from (\ref{lm}) that
\begin{equation*}
\lim_{\alpha (t)\rightarrow 1}\nu_{t}^{G_{\alpha ,-\alpha }^{I}}(x)dx=x^{-1}e^{-x/b}=\nu _{\mathit{\Gamma }%
}(dx)\notag
\end{equation*}
while, for $\alpha (t)=\alpha $, for any $t\geq 0$, we get the L\'{e}%
vy measure of the standard GS process, which reads $\nu _{t}^{G_{\alpha
,-\alpha }}(x)dx=x^{-1}\alpha E_{\alpha }(-x^{\alpha }/b)$ (see \cite{SVON}).

Finally, we show how the tails' behavior of the density of the process $%
G_{\alpha ,-\alpha }^{I}$, for any fixed $t$, differs from those holding for
both the stable and geometric stable random variables (see \cite{SAMO}, p.
17, and \cite{KOZP} respectively). The latter is, obviously, obtained in the
special case where $\alpha (s)=\alpha ,$ for any $s.$

\begin{theorem}
Let $\alpha _{t}^{\ast }:=\max_{0\leq s\leq t}\alpha (s),$ then, for $x\rightarrow \infty ,$%
\begin{equation}
P(G_{\alpha ,-\alpha }^{I}(t)>x)\sim \frac{b}{\Gamma (1-\alpha _{t}^{\ast })}%
\int_{0}^{t}\alpha (s)x^{-\alpha (s)}ds,\qquad t\geq 0.  \label{tau}
\end{equation}
\end{theorem}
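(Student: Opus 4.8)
The plan is to reduce the statement to the tail of the aggregate Lévy measure of $G_{\alpha,-\alpha}^I(t)$ and then to evaluate that tail through the large-argument asymptotics of the Mittag-Leffler function.

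For each fixed $t$ the variable $G_{\alpha,-\alpha}^I(t)$ is non-negative and infinitely divisible, being the position at time $t$ of the additive subordinator started at the origin; by (\ref{pr}) its Laplace exponent is the $s$-integral of the local exponents, so its aggregate Lévy measure is
\begin{equation*}
\Pi_t(dx)=\int_0^t \nu_s^{G_{\alpha,-\alpha}^I}(dx)\,ds=\left(\int_0^t \alpha(s)\,E_{\alpha(s)}(-x^{\alpha(s)}/b)\,ds\right)\frac{dx}{x},
\end{equation*}
where the inner density is the one found in Lemma \ref{lm_1}. As the computation below shows, $\bar\Pi_t(x):=\Pi_t((x,\infty))$ is regularly varying at infinity and therefore subexponential, so the classical tail-equivalence for infinitely divisible laws yields $P(G_{\alpha,-\alpha}^I(t)>x)\sim\bar\Pi_t(x)$; this reduces the theorem to an asymptotic estimate for $\bar\Pi_t(x)$.

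To evaluate $\bar\Pi_t(x)$ I would insert the standard asymptotics $E_{\beta}(-z)\sim z^{-1}/\Gamma(1-\beta)$ as $z\to\infty$ (with $z=x^{\alpha(s)}/b$) and integrate from $x$ to $\infty$, exchanging the two integrals by Tonelli since the integrand is non-negative. The inner computation $\int_x^\infty y^{-1-\alpha(s)}\,dy=x^{-\alpha(s)}/\alpha(s)$ cancels the weight $\alpha(s)$ coming from the density, leaving
\begin{equation*}
\bar\Pi_t(x)\sim b\int_0^t \frac{x^{-\alpha(s)}}{\Gamma(1-\alpha(s))}\,ds,\qquad x\to\infty.
\end{equation*}
The interchange of the limit $x\to\infty$ with the $s$-integral is the point that needs care, and I would justify it by dominated convergence, using the uniform Mittag-Leffler bound already invoked in the proof of Lemma \ref{lm_1} (formula (3.4.30) in \cite{GOR}); this is available because $\alpha(\cdot)$ is continuous on the compact interval $[0,t]$ and hence stays bounded away from $1$.

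The step I expect to be the main obstacle is reconciling the estimate just obtained with the closed form in the statement. Two features deserve scrutiny. First, the factor $\alpha(s)$ in the Lévy density is exactly annihilated by the $1/\alpha(s)$ produced when the tail is integrated, so $\bar\Pi_t(x)$ carries $1/\Gamma(1-\alpha(s))$ and \emph{no} surviving $\alpha(s)$; I would therefore check whether the $\alpha(s)$ appearing in (\ref{tau}) is genuine or an artifact of not performing this integration. Second, since $x^{-\alpha(s)}=e^{-\alpha(s)\ln x}$ the mass of the integral concentrates, as $x\to\infty$, near the values of $s$ at which $\alpha(s)$ is extremal, and only there may $\Gamma(1-\alpha(s))$ be frozen to a single constant outside the integral; pinning down that constant, and in particular whether it is governed by $\max_s\alpha(s)$ or by $\min_s\alpha(s)$, is the delicate part. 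To fix the prefactor unambiguously I would test it against the homogeneous case $\alpha(s)\equiv\alpha$, where (\ref{pr}) becomes the geometric-stable transform $(1+b\lambda^{\alpha})^{-t}$ and a direct Karamata--Tauberian computation gives $P(G_{\alpha,-\alpha}(t)>x)\sim \tfrac{tb}{\Gamma(1-\alpha)}x^{-\alpha}$; this touchstone is what I would use to settle the exact shape of the constant in the inhomogeneous formula.
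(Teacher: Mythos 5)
Your route is genuinely different from the paper's. The paper works on the Laplace transform of the survival function: from (\ref{pr}) it writes $\int_0^\infty e^{-\eta x}P(G^I_{\alpha,-\alpha}(t)>x)\,dx=\eta^{-1}\bigl(1-e^{-\int_0^t\ln(1+b\eta^{\alpha(s)})ds}\bigr)$, expands this as $\eta\to0$, and applies Feller's Karamata--Tauberian theorem (XIII.5.4). You instead pass through the aggregate L\'evy measure $\Pi_t=\int_0^t\nu_s\,ds$ from Lemma \ref{lm_1} and invoke the tail equivalence $P(G(t)>x)\sim\bar\Pi_t(x)$ for infinitely divisible laws with subexponential L\'evy measure. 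That is legitimate (it is the Embrechts--Goldie--Veraverbeke theorem, which the paper never uses), but it is not cheaper: you still owe a proof that $\bar\Pi_t$ is regularly varying, and that is essentially the same concentration estimate on $\int_0^t x^{-\alpha(s)}ds$ that the Tauberian route needs; the uniformity in $s$ of the Mittag--Leffler asymptotics is fine by continuity of $\alpha$ on $[0,t]$, as you say.

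The substantive issue is the one you flag at the end, and you should push it to its conclusion rather than leave it open. Your computation gives $\bar\Pi_t(x)\sim b\int_0^t x^{-\alpha(s)}\,\Gamma(1-\alpha(s))^{-1}ds$, with the factor $\alpha(s)$ exactly cancelled by the tail integration, and your homogeneous touchstone decides the matter: for $\alpha(s)\equiv\alpha$ the geometric-stable (Mittag--Leffler) tail is $tb\,x^{-\alpha}/\Gamma(1-\alpha)$, whereas (\ref{tau}) yields $tb\,\alpha\,x^{-\alpha}/\Gamma(1-\alpha)$. The $\alpha(s)$ in (\ref{tau}) is therefore not genuine; in the paper's own proof it is introduced in the step $\eta^{-1}\bigl(1-e^{-\int_0^t\ln(1+b\eta^{\alpha(s)})ds}\bigr)\sim b\eta^{\alpha_t^*-1}\int_0^t\alpha(s)\eta^{\alpha(s)-\alpha_t^*}ds$, although $\ln(1+b\eta^{\alpha(s)})\sim b\eta^{\alpha(s)}$ carries no such factor. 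Your max-versus-min worry is also warranted: as $x\to\infty$ the integral $\int_0^t x^{-\alpha(s)}ds$ concentrates where $\alpha$ is \emph{minimal}, so the only Gamma constant that can be frozen outside the integral is $\Gamma(1-\min_{0\le s\le t}\alpha(s))$; in the paper, $\int_0^t\alpha(s)\eta^{\alpha(s)-\alpha_t^*}ds$ is treated as the slowly varying factor in Feller's theorem, but it is regularly varying of index $\alpha_t^*-\min_s\alpha(s)\neq0$, so the theorem is applied with the wrong exponent. In short, your strategy, carried out, establishes $P(G^I_{\alpha,-\alpha}(t)>x)\sim b\int_0^t x^{-\alpha(s)}\Gamma(1-\alpha(s))^{-1}ds$, which is the correct asymptotics; it does not, and cannot, terminate in (\ref{tau}) as printed.
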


\begin{proof}
By (\ref{pr}), we can write that%
\begin{eqnarray}
\int_{0}^{+\infty }e^{-\eta x}P(G_{\alpha ,-\alpha }^{I}(t)>x)dx &=&\frac{1-%
\mathbb{E}e^{-\eta \mathcal{G}_{\alpha ,-\alpha }^{I}(t)}}{\eta }  \label{tt}
\\
&=&\frac{1-\exp \{-\int_{0}^{t}\ln (1+b\eta ^{\alpha (s)})ds\}}{\eta }
\notag \\
&\sim &b\eta ^{\alpha _{t}^{\ast }-1}\int_{0}^{t}\alpha (s)\eta
^{\alpha (s)-\alpha _{t}^{\ast }} ds,  \notag
\end{eqnarray}%
for any fixed $t$ and for $\eta \rightarrow 0.$ The integral in the last
line of (\ref{tt}) is a regularly varying function in $1/\eta $, since, for
any real $k,$ by the mean value theorem we have that%
\begin{equation*}
\frac{\int_{0}^{t}\alpha (s)(k\eta )^{\alpha (s)-\alpha _{t}^{\ast }}ds}{%
\int_{0}^{t}\alpha (s)\eta ^{\alpha (s)-\alpha _{t}^{\ast }}ds}=
k^{\alpha (\overline{s}_{t})-\alpha _{t}^{\ast }},
\end{equation*}%
where $\overline{s}_{t}\in (0,t]$. Then, by applying the Tauberian theorem
(see Theorem XIII-5-4, p.446, in \cite{FEL}), as $x\to \infty$ we obtain%
\begin{equation*}
P(G_{\alpha ,-\alpha }^{I}(t)>x)\sim \frac{bx^{-\alpha _{t}^{\ast }}}{\Gamma
(1-\alpha _{t}^{\ast })}\int_{0}^{t}\alpha (s)x^{\alpha _{t}^{\ast }-\alpha
(s)}ds.
\end{equation*}
\end{proof}

\begin{remark}
An analogous result is proved to hold, in \cite{AYA}, for the multistable
symmetric process.
\end{remark}

\section{Second-type inhomogeneous GS process}

We here recall the definition of  time-inhomogeneous (or additive)
Gamma subordinator, which we denote by $\mathit{\Gamma }%
^{I}:=\left\{ \mathit{\Gamma }^{I}(t),t\geq 0\right\} $. The latter
has been studied for the first time in \cite{CIN} and then considered
in \cite{ORT} as a remarkable example among inhomogeneous subordinators.
 For an assigned strictly positive and bounded function $s\to b(s)$, the process $\mathit{\Gamma }%
^{I}$ is completely determined by its finite dimensional distributions
\begin{align} \label{ggg}
\mathbb{E}e^{i\sum_{j=1}^{d}\xi _{j}\mathit{\Gamma }^{I}(t_{j})}:=\exp
\left\{ -\int_{\mathbb{R}}\ln \left( 1-ib(s) \sum_{j=1}^{d}\xi
_{j}1_{(0,t_{j})}(s) \right) ds\right\} ,\qquad \text{ }
\end{align}%
corresponding to the time-dependent L\'{e}vy density
\begin{equation}
\nu _{t}^{\mathit{\Gamma }^{I}}(x)=x^{-1}e^{-x/b(t)}, \qquad x>0. \label{mu}
\end{equation}%
 It is evident that, in the special case $b(t)=b$, the process $\mathit{\Gamma }^{I}$ reduces to the standard
Gamma subordinator $\mathit{\Gamma }.$

We prove now the following result concerning the governing equation of the
additive Gamma subordinator, by considering that, for $\alpha =1, \theta=-1$,
equations (\ref{shilling}) and (\ref{qua}) reduce to
\begin{equation}
\mathcal{P}_{b(t),x}f(x)=-\ln \left(1-b(t)\frac{d}{d x}\right)f(x)
=\int_{b(t)^{-1}}^{\infty }\frac{1}{z}\frac{d}{d x}\left(z-\frac{d}{d x}\right)^{-1}f(x)dz \notag
\end{equation}
and
\begin{equation}
\widehat{\mathcal{P}_{b(t)}}(\xi)=-\ln (1-ib(t)\xi )\label{symb}
\end{equation}
respectively.

\begin{lemma}
\label{lm_2} Let $f\in \mathcal{S}(\mathbb{R})$. Then  the propagator $T_{s,t}^{%
\mathit{\Gamma }^{I}}f(x):=\mathbb{E}\left[ \left. f\left( \mathit{\Gamma }%
^{I}(t)\right) \right\vert \mathit{\Gamma }^{I}(s)=x\right] $ associated to the process
$\mathit{\Gamma }^{I}$ satisfies the following initial value problem%
\begin{equation*}
\left\{
\begin{array}{l}
\frac{\partial }{\partial t}u(x,t)=-\ln \left( 1-b(t)\frac{\partial }{%
\partial x}\right) u(x,t), \qquad t \geq s \\
u(x,s)=f(x)%
\end{array}%
\right. .
\end{equation*}
where the operator on the right side must be meant in the sense of (\ref{generatore logaritmico}).
\end{lemma}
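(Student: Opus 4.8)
The plan is to recognize this as a direct application of Lemma~\ref{lemma operatori pseudodifferenziali}(ii), carried out exactly as in the proof of Theorem~\ref{th 1}, once the characteristic exponent of $\mathit{\Gamma}^{I}$ has been identified and matched to the symbol in (\ref{symb}).

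First I would extract the one-time characteristic exponent of $\mathit{\Gamma}^{I}$. Setting $d=1$ in (\ref{ggg}) gives
\begin{equation*}
\mathbb{E}e^{i\xi \mathit{\Gamma}^{I}(t)}=\exp\left\{-\int_{0}^{t}\ln(1-ib(\tau)\xi)\,d\tau\right\},
\end{equation*}
so that $\mathit{\Gamma}^{I}$ is an additive process with characteristic exponent $\eta(\xi,t)=-\ln(1-ib(t)\xi)$, in agreement with (\ref{symb}). Since $s\mapsto b(s)$ is continuous, $\eta(\xi,t)$ is continuous in $t$; since $b$ is bounded and positive, $|\eta(\xi,t)|$ is bounded in $t$ for each fixed $\xi$ and grows only logarithmically in $|\xi|$, so in particular $|\eta(\xi,t)|\leq C(1+|\xi|^{2})$ and the hypotheses of Lemma~\ref{lemma operatori pseudodifferenziali} are met.

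Next I would identify the generator. Specializing (\ref{aggiunta della derivataa}) to $\alpha=1$, $\theta=-1$ shows that $\overline{\mathcal{D}^{1,-1}}=\frac{\partial}{\partial x}$ has symbol $i\xi$; hence the functional-calculus operator $-\ln(1-b(t)\frac{\partial}{\partial x})$, understood in the sense of (\ref{generatore logaritmico}), carries the symbol $-\ln(1-ib(t)\xi)$, which is exactly $\eta(\xi,t)=\widehat{\mathcal{P}_{b(t)}}(\xi)$ from (\ref{symb}). Therefore, by Lemma~\ref{lemma operatori pseudodifferenziali}(ii), the time-dependent generator of the propagator $T_{s,t}^{\mathit{\Gamma}^{I}}$ is precisely $\mathcal{A}_{t}=-\ln(1-b(t)\frac{\partial}{\partial x})$.

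Finally, I would argue as in Theorem~\ref{th 1} by passing to Fourier space. Using the propagator representation (\ref{rappresentazione pseudodifferenziale del propagatore}), one has $\widehat{T_{s,t}^{\mathit{\Gamma}^{I}}f}(\xi)=e^{\int_{s}^{t}\eta(\xi,\tau)d\tau}\tilde{f}(\xi)$, which satisfies the scalar ODE $\frac{\partial}{\partial t}\widehat{u}(\xi,t)=\eta(\xi,t)\widehat{u}(\xi,t)=-\ln(1-ib(t)\xi)\widehat{u}(\xi,t)$ with $\widehat{u}(\xi,s)=\tilde{f}(\xi)$; Fourier inversion then yields the stated equation, with $u(x,s)=T_{s,s}^{\mathit{\Gamma}^{I}}f(x)=f(x)$ giving the initial condition, and the commutativity of $T_{s,t}$ and $\mathcal{A}_{t}$ on $\mathcal{S}(\mathbb{R})$ (Remark~\ref{richiamo}) ensuring that $u(x,t)=T_{s,t}^{\mathit{\Gamma}^{I}}f(x)$ genuinely solves the forward equation. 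The only step requiring care is the identification in the third paragraph: confirming that $-\ln(1-b(t)\frac{\partial}{\partial x})$, a priori defined only through the resolvent integral (\ref{generatore logaritmico}), indeed has symbol $-\ln(1-ib(t)\xi)$. This is legitimate because $z\mapsto\ln(1+b(t)z)$ is a complete Bernstein function, so that the operational calculus applies, as established in the discussion preceding Definition~\ref{def_2bis}.
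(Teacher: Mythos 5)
Your proposal is correct and follows essentially the same route as the paper: identify the characteristic exponent $-\ln(1-ib(t)\xi)$ from (\ref{ggg}) with $d=1$, match it to the symbol $\widehat{\mathcal{P}_{b(t)}}(\xi)$ in (\ref{symb}), and conclude via Lemma \ref{lemma operatori pseudodifferenziali}(ii). The extra care you take (verifying the hypotheses of that lemma, justifying the symbol of $-\ln(1-b(t)\frac{\partial}{\partial x})$ through the complete Bernstein function calculus, and invoking the commutativity point of Remark \ref{richiamo}) only fills in details the paper leaves implicit.
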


\begin{proof}
We take the first time-derivative of (\ref{ggg}), in the case $d=1$ with $t_{1}=t$
and $\xi_{1}=\xi$, so that we get%
\begin{eqnarray*}
\frac{\partial }{\partial t}\mathbb{E}e^{i\xi \mathit{\Gamma }^{I}(t)}
&=&-\ln \left( 1-ib(t)\xi \right) \mathbb{E}e^{i\xi \mathit{\Gamma }^{I}(t)}
\\
&=&\widehat{\mathcal{P}_{b(t)}}(\xi )\mathbb{E}e^{i\xi \mathit{\Gamma }%
^{I}(t)},
\end{eqnarray*}%
and  considering Lemma \ref{lemma operatori pseudodifferenziali} (ii), the proof is complete.
\end{proof}

In order to let this process verify the useful property of finite
exponential moments, we make the further assumption that $b(t)<K,$ for any $%
t\geq 0$ and for a constant $K<1.$ Thus, for any fixed $t$ and for $|u|\leq
\frac{1}{K},$  we have that%
\begin{equation}
\int_{0}^{+\infty }e^{ux}\mu _{t}^{\mathit{\Gamma }^{I}}(x)dx=\int_{0}^{t}%
\int_{0}^{+\infty }x^{-1}e^{x(u-1/b(s))}dxds<\infty.  \label{em}
\end{equation}%
 This is a necessary and
sufficient condition for the finiteness of the moment generating function
(see \cite{EBE2}), which, thus, is finite for any $|\gamma |\leq \frac{1}{k}$, and
reads%
\begin{equation*}
\mathbb{E}e^{\gamma \mathit{\Gamma }^{I}(t)}=\exp \left\{ -\int_{0}^{t}\ln
\left( 1-\gamma b(s)\right) ds\right\} ,
\end{equation*}%
so that we get%
\begin{eqnarray}
\mathbb{E}\mathit{\Gamma }^{I}(t) &=&\int_{0}^{t}b(s)ds  \label{mom} \\
\mathbb{V}\left[ \mathit{\Gamma }^{I}(t)\right] &=&\int_{0}^{t}b(s)^{2}ds
\notag
\end{eqnarray}%
As a consequence of (\ref{em}), we can state that the additive Gamma process
$\mathit{\Gamma }^{I}$ is a special semimartingale (see \cite{JAC}) and then
it is suitable for financial applications (see \cite{KAL}), when $b(t)<K<1.$

The additive Gamma process is the fundamental ingredient to construct the following.

\begin{definition}
\label{def_4} \textbf{(Inhomogeneous GS process - II) }Let $\ \left\{
S_{\alpha ,\theta }(t),t\geq 0\right\} $ be an $\alpha $-stable process defined
in (\ref{funzione caratteristica processo stabile}) and $%
\mathit{\Gamma }^{I}$ be a inhomogeneous Gamma subordinator, independent from $%
S_{\alpha ,\theta },$   we
define $G_{\alpha ,\theta }^{II}:=\left\{ G_{\alpha ,\theta }^{II}(t),t\geq
0\right\} $ by the following subordination (see section 2.2)
\begin{equation}
G_{\alpha ,\theta }^{II}(t):=S_{\alpha ,\theta }(\mathit{\Gamma }%
^{I}(t)),\qquad t\geq 0.  \label{sub7}
\end{equation}%
\end{definition}

\begin{theorem}
\label{th_3} Let $f\in S(\mathbb{R})$, then $T_{s,t}^{%
\mathcal{G}_{\alpha ,\theta }^{II}}f(x):=\mathbb{E}\left[ \left. f\left(
G_{\alpha ,\theta }^{II}(t)\right) \right\vert G_{\alpha ,\theta }^{II}(s)=x%
\right] $ satisfies the following
initial value problem%
\begin{equation}
\left\{
\begin{array}{l}
\frac{\partial }{\partial t}u(x,t)=\mathcal{P}_{b(t),x}^{\alpha ,\theta
}u(x,t), \qquad t \geq s \\
u(x,s)=f(x).%
\end{array}%
\right. ,  \label{pr4}
\end{equation}%
where the operator $\mathcal{P}_{b(t),x}^{\alpha ,\theta }$ is defined in
Def.\ref{def_2bis}.
\end{theorem}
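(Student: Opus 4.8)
The plan is to mirror the proof of Theorem \ref{th 1}: compute the characteristic function of $G_{\alpha ,\theta }^{II}$ explicitly, recognise its characteristic exponent as the symbol of $\mathcal{P}_{b(t)}^{\alpha ,\theta }$, and then invoke Lemma \ref{lemma operatori pseudodifferenziali}(ii). First I would condition on the independent subordinator $\mathit{\Gamma }^{I}$. Since $S_{\alpha ,\theta }$ is a L\'evy process with $\mathbb{E}e^{i\xi S_{\alpha ,\theta }(u)}=e^{-u\psi _{\alpha ,\theta }(\xi )}$, and $\mathit{\Gamma }^{I}$ has Laplace transform $\mathbb{E}e^{-\lambda \mathit{\Gamma }^{I}(t)}=\exp \{-\int _0^t \ln (1+b(\tau )\lambda )d\tau \}$ (its Bernstein function being $f(\lambda ,\tau )=\ln (1+b(\tau )\lambda )$, read off from (\ref{ggg}) via $i\xi \mapsto -\lambda $), the tower property gives
\[
\mathbb{E}e^{i\xi G_{\alpha ,\theta }^{II}(t)}=\mathbb{E}\bigl [e^{-\mathit{\Gamma }^{I}(t)\psi _{\alpha ,\theta }(\xi )}\bigr ]=\exp \Bigl \{-\int _0^t \ln \bigl (1+b(\tau )\psi _{\alpha ,\theta }(\xi )\bigr )d\tau \Bigr \}.
\]
In fact this is exactly the content of the subordination Lemma of Section 2.2 applied with $M=S_{\alpha ,\theta }$ (so that $-\eta _M(\xi )=\psi _{\alpha ,\theta }(\xi )$) and $H=\mathit{\Gamma }^{I}$, which moreover certifies that $G_{\alpha ,\theta }^{II}$ is additive.

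Having this, the characteristic exponent of $G_{\alpha ,\theta }^{II}$ is $\eta (\xi ,t)=-\ln (1+b(t)\psi _{\alpha ,\theta }(\xi ))$, which by (\ref{qua}) is precisely the symbol $\widehat{\mathcal{P}_{b(t)}^{\alpha ,\theta }}(\xi )$ of the operator of Definition \ref{def_2bis}; equivalently, the generator $\mathcal{L}_t h=\int _0^\infty (T_z h-h)\nu _t^{\mathit{\Gamma }^{I}}(dz)$ furnished by the subordination Lemma, with $\nu _t^{\mathit{\Gamma }^{I}}(dz)=z^{-1}e^{-z/b(t)}dz$ from (\ref{mu}), coincides with $\mathcal{P}_{b(t)}^{\alpha ,\theta }$ through the complete-Bernstein representation (\ref{shilling}). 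Then, exactly as in Theorem \ref{th 1}, I would take the Fourier transform in $x$ of the first equation in (\ref{pr4}): by Lemma \ref{lemma operatori pseudodifferenziali}(ii) it becomes the scalar ODE $\partial _t \widehat{u}(\xi ,t)=-\ln (1+b(t)\psi _{\alpha ,\theta }(\xi ))\widehat{u}(\xi ,t)$ with $\widehat{u}(\xi ,s)=\tilde{f}(\xi )$, whose unique solution $\widehat{u}(\xi ,t)=e^{-\int _s^t \ln (1+b(\tau )\psi _{\alpha ,\theta }(\xi ))d\tau }\tilde{f}(\xi )$ is, by Lemma \ref{lemma operatori pseudodifferenziali}(i), the Fourier transform of $T_{s,t}^{G_{\alpha ,\theta }^{II}}f$. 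That $u(x,t)=T_{s,t}^{G_{\alpha ,\theta }^{II}}f(x)$ genuinely solves the forward equation (and not merely the backward one) is guaranteed, for $f\in \mathcal{S}(\mathbb{R})$, by the commutation of $T_{s,t}$ and $\mathcal{A}_t$ established in Remark \ref{richiamo}, since both are Fourier multipliers.

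The step I expect to require the most care is the passage to a complex argument in the Laplace transform of $\mathit{\Gamma }^{I}$: the identity $\mathbb{E}[e^{-\mathit{\Gamma }^{I}(t)\psi _{\alpha ,\theta }(\xi )}]=\exp \{-\int _0^t \ln (1+b(\tau )\psi _{\alpha ,\theta }(\xi ))d\tau \}$ must be justified by analytic continuation, since $\psi _{\alpha ,\theta }(\xi )=|\xi |^{\alpha }e^{i\,sign(\xi )\pi \theta /2}$ is not real. This is legitimate because $\mathrm{Re}\,\psi _{\alpha ,\theta }(\xi )=|\xi |^{\alpha }\cos (\pi \theta /2)\geq 0$ under the standing constraint $|\theta |\leq \min \{\alpha ,2-\alpha \}\leq 1$, so that $1+b(\tau )\psi _{\alpha ,\theta }(\xi )$ stays in the right half-plane, away from the branch cut of the logarithm, and both the conditional expectation and the exponent are well defined. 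Once this is in place, the remaining manipulations are the routine Fourier-multiplier computations already carried out in Theorem \ref{th 1}.
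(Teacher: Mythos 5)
Your proposal is correct and follows essentially the same route as the paper: compute the characteristic function of $G_{\alpha,\theta}^{II}$ by conditioning on the independent subordinator $\mathit{\Gamma}^{I}$, identify the resulting characteristic exponent $-\ln(1+b(t)\psi_{\alpha,\theta}(\xi))$ with the symbol (\ref{qua}) of $\mathcal{P}_{b(t)}^{\alpha,\theta}$, and conclude via Lemma \ref{lemma operatori pseudodifferenziali}(ii), with the forward/backward commutation issue handled exactly as in Remark \ref{richiamo}. Your additional remark justifying the substitution of the complex argument $\psi_{\alpha,\theta}(\xi)$ into the Laplace transform of $\mathit{\Gamma}^{I}$ is a point the paper passes over silently, but it does not change the argument.
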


\begin{proof}
For any $t>0$, we can evaluate the characteristic function of $S_{\alpha ,\theta }(\mathit{%
\Gamma }^{I}(t)),$ by a standard conditioning argument
\begin{eqnarray}
\mathbb{E}e^{i\xi S_{\alpha ,\theta }(\mathit{\Gamma }^{I}(t))} &=&\mathbb{E}%
\left\{ \mathbb{E}\left[ \left. e^{i\xi S_{\alpha ,\theta }(\mathit{\Gamma }%
^{I}(t))}\right\vert \mathit{\Gamma }^{I}(t)\right] \right\} =\mathbb{E}\exp
\{-\psi _{\alpha ,\theta }(\xi )\mathit{\Gamma }^{I}(t)\}  \label{sub8} \\
&=&[\text{by (\ref{funzione caratteristica processo stabile})}]  \notag \\
&=&\exp \left\{ -\int_{0}^{t}\ln \left[ 1+b(s)\psi _{\alpha ,\theta }(\xi )%
\right] ds\right\} ,  \notag
\end{eqnarray}%
where $b(s)\geq 0$, for any $s\geq 0$ and $\int_{0}^{t}b(s)ds<\infty .$
By taking the Fourier transform of the first equation in (\ref{pr4}), we get,
by (\ref{qua})%
\begin{eqnarray*}
\frac{\partial }{\partial t}\widehat{u}(\xi ,t) &=&\widehat{\mathcal{P}%
_{b(t)}^{\alpha ,\theta }}(\xi )\widehat{u}(\xi ,t) \\
&=&-\ln (1+b(t)\psi _{\alpha ,\theta }(\xi ))\widehat{u}(\xi ,t)
\end{eqnarray*}%
and Lemma \ref{lemma operatori pseudodifferenziali} (ii) gives the result.
\end{proof}

 Of course, the same considerations made in Remark \ref{richiamo} are also valid for Theorem \ref{th_3}.

\begin{remark}
We observe
that the process defined in Def (\ref{def_4})
 can be obtained by a homogeneous GS by a time-dependent
change of scale, and this makes  $GS^{II}$ statistically tractable, as opposed
to the usual case of non-stationary processes.
Indeed, let $G_{\alpha, \theta}$ be a homogeneous geometric stable process such that
\begin{align*}
\mathbb{E}e^{i \xi G_{\alpha, \theta}(t)}= e^{-t \ln (1+\psi _{\alpha, \theta}(\xi))}
\end{align*}
We divide the interval $[0,t]$ into $n$ subintervals of length
$t/n$ and write the telescopic series $G_{\alpha, \theta}(t) =
\sum _{k=0}^{n-1} (G_{\alpha, \theta}(t_{k+1})-G_{\alpha,
\theta}(t_k))$. In order to change the scale, we let the increment
$G_{\alpha, \theta}(t_{k+1})-G_{\alpha, \theta}(t_k)$ be changed
into $b(t_k)^{\frac{1}{\alpha}}\bigl (  G_{\alpha,
\theta}(t_{k+1})-G_{\alpha, \theta}(t_k)    \bigr )$. Then, for
any $t\geq 0$, the following limit as $n\to \infty$ holds in
distribution
\begin{align}\label{trasformazione di scala}
\sum _{k=0}^{n-1} b(t_k)^{\frac{1}{\alpha}}\bigl (  G_{\alpha, \theta}
(t_{k+1})-G_{\alpha, \theta}(t_k)    \bigr )\stackrel{d}{\to}G^{II}_{\alpha, \theta}(t)
\end{align}
Indeed, by independence of the increments,
\begin{align*}
& \mathbb{E}\, \textrm{exp} \biggl \{ i \xi \sum _{k=0}^{n-1} b(t_k)^{\frac{1}{\alpha}}
  \bigl ( G_{\alpha, \theta}(t_{k+1})-G_{\alpha, \theta}(t_k)   \bigr )    \biggr \}\\
&= \prod _{k=0}^{n-1} \mathbb{E}\,\, exp \biggl \{i \xi b(t_k)^{\frac{1}{\alpha}}
(G_{\alpha, \theta}(t_{k+1})-G_{\alpha, \theta}(t_k))\biggr \}\\
&= \prod _{k=0}^{n-1} exp \biggl \{ -(t_{k+1}-t_k)\ln (1+b(t_k)\psi _{\alpha, \theta}(\xi))  \biggr \} \\
& \stackrel{n\to \infty}{\to}\, \,  exp \biggl \{-\int _0^t \ln (1+b(\tau)
 \psi _{\alpha, \theta} (\xi))  d\tau \biggr \}.
\end{align*}
\end{remark}

We derive now the tails' behavior of the density of the process $G_{\alpha
,\theta }^{II}$, for any fixed $t$. In this case, contrary to $GS^{I}$,
 we can prove the following result in the more general setting,
i.e. for any $\alpha \in (0,2]$ and $|\theta| \leq \min \{\alpha ,2-\alpha \}$%
, not only in the completely positively skewed case. Indeed, we can resort
here to the subordinating relation (\ref{sub7}).

\begin{theorem}
Let $\alpha \in (0,2]$ and $|\theta |\leq \min \{\alpha ,2-\alpha \},$ then
we have that
\begin{equation}
\left\{
\begin{array}{l}
\lim _{x\to \infty}x^\alpha P(G_{\alpha ,\theta }^{II}(t)>x)= \frac{C_{\alpha ,\theta }}{%
\Gamma (1-\alpha )}\int_{0}^{t}b(s)ds \\
\lim_{x\to \infty} x^\alpha P(G_{\alpha ,\theta }^{II}(t)<-x)= \frac{\overline{C}_{\alpha ,\theta
}}{\Gamma (1-\alpha )}\int_{0}^{t}b(s)ds%
\end{array}%
\right. ,\qquad t\geq 0,  \label{re}
\end{equation}%
where $C_{\alpha ,\theta }=\frac{1}{2}\left[ 1-\frac{\tan (\pi \theta /2)}{%
\tan (\pi \alpha /2)}\right] $ and $\overline{C}_{\alpha ,\theta }=\frac{1}{2%
}\left[ 1+\frac{\tan (\pi \theta /2)}{\tan (\pi \alpha /2)}\right] .$
\end{theorem}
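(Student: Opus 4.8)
The plan is to exploit the subordination representation $G_{\alpha,\theta}^{II}(t)=S_{\alpha,\theta}(\mathit{\Gamma}^{I}(t))$ from Definition \ref{def_4}, conditioning on the value of the inhomogeneous Gamma subordinator. Denoting by $\mu_{t}^{\mathit{\Gamma}^{I}}$ the law of $\mathit{\Gamma}^{I}(t)$, I would first write, by a standard conditioning argument,
\begin{equation*}
x^{\alpha}P(G_{\alpha,\theta}^{II}(t)>x)=\int_{0}^{\infty}x^{\alpha}P(S_{\alpha,\theta}(u)>x)\,\mu_{t}^{\mathit{\Gamma}^{I}}(du),
\end{equation*}
so that the whole argument reduces to (a) the known tail asymptotics of the stable law and (b) a justification for passing the limit $x\to\infty$ under the integral sign.

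For step (a), I would recall the classical tail estimate for $\alpha$-stable random variables (\cite{SAMO}), which in Feller's parametrization yields $P(S_{\alpha,\theta}(1)>x)\sim\frac{C_{\alpha,\theta}}{\Gamma(1-\alpha)}x^{-\alpha}$ and $P(S_{\alpha,\theta}(1)<-x)\sim\frac{\overline{C}_{\alpha,\theta}}{\Gamma(1-\alpha)}x^{-\alpha}$, with $C_{\alpha,\theta}$ and $\overline{C}_{\alpha,\theta}$ as in the statement. Combining this with the self-similarity $S_{\alpha,\theta}(u)\stackrel{d}{=}u^{1/\alpha}S_{\alpha,\theta}(1)$ gives, for each fixed $u>0$, the pointwise limit $x^{\alpha}P(S_{\alpha,\theta}(u)>x)\to\frac{C_{\alpha,\theta}}{\Gamma(1-\alpha)}u$ as $x\to\infty$.

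For step (b), I would set $g(y):=y^{\alpha}P(S_{\alpha,\theta}(1)>y)$ and observe that, by self-similarity, $x^{\alpha}P(S_{\alpha,\theta}(u)>x)=u\,g(xu^{-1/\alpha})$. Since $g$ is continuous on $(0,\infty)$, vanishes as $y\to0^{+}$ and converges to $C_{\alpha,\theta}/\Gamma(1-\alpha)$ as $y\to\infty$, it is bounded, say by $M$; hence the integrand is dominated by $Mu$, which is $\mu_{t}^{\mathit{\Gamma}^{I}}$-integrable because $\int_{0}^{\infty}u\,\mu_{t}^{\mathit{\Gamma}^{I}}(du)=\mathbb{E}\mathit{\Gamma}^{I}(t)=\int_{0}^{t}b(s)\,ds<\infty$ by (\ref{mom}). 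The dominated convergence theorem then gives
\begin{equation*}
\lim_{x\to\infty}x^{\alpha}P(G_{\alpha,\theta}^{II}(t)>x)=\frac{C_{\alpha,\theta}}{\Gamma(1-\alpha)}\int_{0}^{\infty}u\,\mu_{t}^{\mathit{\Gamma}^{I}}(du)=\frac{C_{\alpha,\theta}}{\Gamma(1-\alpha)}\int_{0}^{t}b(s)\,ds,
\end{equation*}
and the left-tail identity follows verbatim with $\overline{C}_{\alpha,\theta}$ replacing $C_{\alpha,\theta}$, using $P(S_{\alpha,\theta}(u)<-x)$.

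The conditioning step and the dominated-convergence bookkeeping are routine; the point requiring genuine care is the correct identification of the stable tail constants $C_{\alpha,\theta},\overline{C}_{\alpha,\theta}$ in Feller's parametrization, including the $\Gamma(1-\alpha)$ normalization and the consistent degeneration to $0$ as $\alpha\to2$, together with the verification that $g$ is truly bounded on $(0,\infty)$ so that the dominating function $Mu$ is integrable against the law of $\mathit{\Gamma}^{I}(t)$. Note that only the finiteness of the first moment of $\mathit{\Gamma}^{I}(t)$ is needed here, which holds for any bounded $b$ and does not require the stronger condition $b(t)<K<1$ used elsewhere for exponential moments.
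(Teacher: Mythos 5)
Your proposal is correct and follows essentially the same route as the paper: condition on $\mathit{\Gamma}^{I}(t)$ via the subordination $G_{\alpha,\theta}^{II}(t)=S_{\alpha,\theta}(\mathit{\Gamma}^{I}(t))$, invoke the stable tail asymptotics from \cite{SAMO} in Feller's parametrization, and pass the limit inside the integral to land on $\frac{C_{\alpha,\theta}}{\Gamma(1-\alpha)}\mathbb{E}\mathit{\Gamma}^{I}(t)$. The only difference is that you explicitly justify the interchange of limit and integral by the self-similarity bound $x^{\alpha}P(S_{\alpha,\theta}(u)>x)=u\,g(xu^{-1/\alpha})\leq Mu$ and the finiteness of $\mathbb{E}\mathit{\Gamma}^{I}(t)$, a step the paper performs without comment.
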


\begin{proof}
We prove the first relation in (\ref{re}), the second one can be obtained by
similar arguments. We apply Property 1.2.15 in \cite{SAMO}, p.16, regarding
 the tail behavior of the stable process, which in our parametrization
of stable laws \footnote{In agreement with part of the literature,
 the authors in \cite{SAMO} express the characteristic function
 of stable distributions using the parameters triplet $\alpha, \sigma, \beta $
 (see \cite{SAMO} p.7). Our parametrization with
 $(\alpha, \theta)$, due to Feller, can be obtained by setting
 $\sigma= (\cos \frac{\pi \theta}{2})^{\frac{1}{\alpha}}$  and
 $\theta= \frac{2}{\pi}\arctan (-\beta \tan\frac{\pi \alpha}{2})$. } reads
\begin{equation*}
\lim_{x\rightarrow \infty }x^{\alpha }P(S_{\alpha ,\theta }(t)>x)=\frac{%
C_{\alpha ,\theta }\,t}{\Gamma (1-\alpha )}.
\end{equation*}%
By Def.\ref{def_4}, we have
\begin{align*}
\lim_{x\rightarrow \infty }x^{\alpha }P(S_{\alpha ,\theta }(\Gamma (t))>x)&
=\lim_{x\rightarrow \infty }x^{\alpha }\int_{0}^{\infty }P(S_{\alpha ,\theta
}(\mathit{\Gamma }^{I}(t))>x|\Gamma ^I(t)=z)P(\mathit{\Gamma }^{I}(t)\in dz) \\
& =\int_{0}^{\infty }\lim_{x\rightarrow \infty }x^{\alpha }P(S_{\alpha
,\theta }(z)>x)P(\mathit{\Gamma }^{I}(t)\in dz) \\
& =\frac{C_{\alpha ,\theta }}{\Gamma (1-\alpha )}\mathbb{E}\mathit{\Gamma }%
^{I}(t),
\end{align*}%
which gives (\ref{re}), by (\ref{mom}).
\end{proof}

\subsubsection{Computation of the L\'{e}vy measure}

We now compute explicitly the time-dependent L\'{e}vy measure of the
$GS^{II}$ process. This expression is not even known
for the homogeneous geometric stable process, with the exception of the case of the GS
subordinator (see, for example, \cite{SVON}).
 In the following, let ${\LARGE H}_{p,q}^{m,n}$ denote the H-function defined as
(see \cite{MAT} p.13):%
\begin{equation*}
{\LARGE H}_{p,q}^{m,n}\left[ \left. z\right\vert
\begin{array}{ccc}
(a_{1},A_{1}) & ... & (a_{p},A_{p}) \\
(b_{1},B_{1}) & ... & (b_{q},B_{q})%
\end{array}%
\right] =\frac{1}{2\pi i}\int_{\mathcal{L}}\frac{\left\{
\prod\limits_{j=1}^{m}\Gamma (b_{j}+B_{j}s)\right\} \left\{
\prod\limits_{j=1}^{n}\Gamma (1-a_{j}-A_{j}s)\right\} z^{-s}ds}{\left\{
\prod\limits_{j=m+1}^{q}\Gamma (1-b_{j}-B_{j}s)\right\} \left\{
\prod\limits_{j=n+1}^{p}\Gamma (a_{j}+A_{j}s)\right\} },
\end{equation*}%
with $z\neq 0,$ $m,n,p,q\in \mathbb{N}_{0},$ for $0\leq m\leq q$, $0\leq
n\leq p$, $a_{j},b_{j}\in \mathbb{R},$ $A_{j},B_{j}\in \mathbb{R}^{+},$ for $%
i=1,...,p,$ $j=1,...,q$ and $\mathcal{L}$ is a contour such that the
following condition is satisfied%
\begin{equation}
A_{\lambda }(b_{j}+\nu )\neq B_{j}(a_{\lambda }-k-1),\qquad j=1,...,m,\text{
}\lambda =1,...,n,\text{ }\nu ,k=0,1,...  \label{1.6}
\end{equation}

Let moreover $\mu =\sum_{j=1}^{q}B_{j}-\sum_{j=1}^{p}A_{j}.$\\
For details on the use of H-function in connection with fractional process, see  \cite{MAI2}.

\begin{lemma}
\label{lm_4}
The L\'{e}vy measure of $GS^{II}$ defined in Def.\ref{def_4} is given, for $%
\alpha \in (0,1),$ by%
\begin{equation}
\nu _{t}^{G_{\alpha ,\theta }^{II}}(dx)=\alpha \frac{dx}{|x|}{\LARGE H}%
_{3,2}^{1,2}\left[ \left. \frac{b(t)}{|x|^{\alpha }}\right\vert
\begin{array}{ccc}
(1,\alpha ) & (1,1) & \left( 1,\frac{\alpha -\theta sign(x)}{2}\right)  \\
(1,1) & \left( 1,\frac{\alpha -\theta sign(x)}{2}\right)  & \;%
\end{array}%
\right] ,\text{ }x\neq 0,\;|\theta |\leq \alpha   \label{hh}
\end{equation}%
while, for $\alpha \in (1,2)$, by%
\begin{equation}
\nu _{t}^{G_{\alpha ,\theta }^{II}}(dx)=\frac{dx}{\alpha |x|}{\LARGE H}%
_{2,3}^{2,1}\left[ \left. \frac{|x|}{b(t)^{1/\alpha }}\right\vert
\begin{array}{ccc}
\left( 1,\frac{1}{\alpha }\right)  & \left( 1,\frac{\alpha -\theta sign(x)}{%
2\alpha }\right)  & \; \\
(1,1) & \left( 0,\frac{1}{\alpha }\right)  & \left( 1,\frac{\alpha
-\theta sign(x)}{2\alpha }\right)
\end{array}%
\right] ,\text{ }x\neq 0,\;|\theta |\leq 2-\alpha .  \label{hh2}
\end{equation}
\end{lemma}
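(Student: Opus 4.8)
The plan is to exploit the subordination $G_{\alpha,\theta}^{II}(t)=S_{\alpha,\theta}(\mathit{\Gamma}^{I}(t))$ of Def.~\ref{def_4} and to reduce the L\'evy measure to a single integral that can be recognized as a Mellin--Barnes integral, hence as an $H$-function.

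First I would record the time-dependent L\'evy measure by the subordination rule. Since $\mathit{\Gamma}^{I}$ is a driftless inhomogeneous subordinator with L\'evy density $\nu_{t}^{\mathit{\Gamma}^{I}}(z)=z^{-1}e^{-z/b(t)}$ (see (\ref{mu})), the Lemma on time-change by inhomogeneous subordinators gives the generator $\mathcal{L}_{t}h(x)=\int_{0}^{\infty}(T_{z}h(x)-h(x))\,\nu_{t}^{\mathit{\Gamma}^{I}}(dz)$, where $T_{z}$ is the stable semigroup. Writing $T_{z}h(x)=\int_{\mathbb{R}}h(x+y)\,\varphi_{z}(y)\,dy$, with $\varphi_{z}$ the density of $S_{\alpha,\theta}(z)$, and interchanging the order of integration by Fubini, one reads off the L\'evy--Khintchine form (Lemma~\ref{lemma operatori pseudodifferenziali}~(iii)) with
\begin{equation}
\nu_{t}^{G_{\alpha,\theta}^{II}}(dx)=\left(\int_{0}^{\infty}\varphi_{z}(x)\,z^{-1}e^{-z/b(t)}\,dz\right)dx .
\label{plan-sub}
\end{equation}

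Next I would make the integral explicit. By self-similarity of the stable process, $\varphi_{z}(x)=z^{-1/\alpha}g(xz^{-1/\alpha})$, where $g:=\varphi_{1}$ is the density of $S_{\alpha,\theta}(1)$, and $g$ admits the standard $H$-function (Mellin--Barnes) representation, whose Gamma factors depend on $\alpha$ and on $\theta\,sign(x)$ (see \cite{MAI2}); this is the source of the $sign(x)$-dependence in (\ref{hh})--(\ref{hh2}). Substituting into (\ref{plan-sub}), writing $g$ through its contour integral, and interchanging it with the $z$-integral, the inner integral is elementary,
\begin{equation*}
\int_{0}^{\infty}z^{(s-1)/\alpha-1}e^{-z/b(t)}\,dz=\Gamma\!\left(\tfrac{s-1}{\alpha}\right)b(t)^{(s-1)/\alpha},
\end{equation*}
and supplies the extra Gamma factor. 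Collecting all Gamma factors reassembles a new Mellin--Barnes integrand in the variable $|x|/b(t)^{1/\alpha}$, which is precisely the $H$-function of (\ref{hh2}).

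Finally I would reconcile the two regimes. The formulas (\ref{hh}) and (\ref{hh2}) are two presentations of the same Mellin--Barnes integral connected by the standard $H$-function identities: the inversion $H_{p,q}^{m,n}[z\mid\cdots]=H_{q,p}^{n,m}[1/z\mid\cdots]$ (note that the indices $(1,2,3,2)$ of (\ref{hh}) and $(2,1,2,3)$ of (\ref{hh2}) are indeed swapped), the argument-power rule $z\to z^{\delta}$ (which rescales all the $A_{j},B_{j}$ by $\delta$), and the shift $z^{\sigma}H[z\mid(a_{j},A_{j}),(b_{j},B_{j})]=H[z\mid(a_{j}+\sigma A_{j},A_{j}),(b_{j}+\sigma B_{j},B_{j})]$; together these account for the scales $\alpha,1,(\alpha-\theta\,sign(x))/2$ in (\ref{hh}) versus $1/\alpha,(\alpha-\theta\,sign(x))/(2\alpha)$ in (\ref{hh2}), for the switch of the argument between $b(t)/|x|^{\alpha}=(|x|/b(t)^{1/\alpha})^{-\alpha}$ and $|x|/b(t)^{1/\alpha}$, and for the differing prefactors $\alpha\,dx/|x|$ and $dx/(\alpha|x|)$. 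The choice of presentation is dictated by which form gives a convergent contour/series, which depends on the sign of $1-\alpha$, producing the split $\alpha\in(0,1)$ versus $\alpha\in(1,2)$. The main obstacle will be precisely this bookkeeping: writing down the exact Mellin transform of $g$ as a function of $\theta\,sign(x)$, tracking the Gamma factors so that the pairs $(a_{j},A_{j})$ and $(b_{j},B_{j})$ match (\ref{hh})--(\ref{hh2}), and checking that the contour condition (\ref{1.6}) holds so that the interchange of integrals and the resulting $H$-function representation are legitimate.
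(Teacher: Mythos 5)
Your proposal is correct and follows essentially the same route as the paper: both start from the subordination formula $\nu_t^{G^{II}}(dx)=\bigl(\int_0^\infty \varphi_z(x)\,z^{-1}e^{-z/b(t)}\,dz\bigr)dx$, use the self-similarity of the stable density, and end by recognizing a Mellin--Barnes integral as an $H$-function. The only cosmetic difference is that the paper inserts Feller's convergent series for the stable density (one series in powers of $|x|^{-\alpha}$ for $\alpha\in(0,1)$, another in powers of $|x|$ for $\alpha\in(1,2)$, which is what produces the two separate formulas), integrates term by term and then re-sums the series into a contour integral, whereas you work directly with the contour representation and would relate the two regimes by $H$-function inversion identities -- the same computation read in the opposite direction.
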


\begin{proof}
We apply the series representation of the stable law (see
\cite{FEL}, Lemma 1, p.583), together with the reflection property
of the Gamma function. Then, by considering the subordinating
relationship $S_{\alpha ,\theta }(\Gamma ^{I})$ together with
(\ref{mu}) and (1.140) in \cite{MAT},
we can write, for $x>0$ and $%
\alpha \in (0,1)$, with $|\theta |\leq \alpha ,$%
\begin{eqnarray*}
\nu _{t}^{G_{\alpha ,\theta }^{II}}(dx) &=&dx\int_{0}^{+\infty
}p_{\alpha
}(x;\theta ,\tau )\tau ^{-1}e^{-\tau /b(t)}d\tau \\
&=&dx\int_{0}^{+\infty }\frac{1}{\tau ^{1/\alpha }}p_{\alpha }\left( \frac{x%
}{\tau ^{1/\alpha }};\theta ,1\right) \tau ^{-1}e^{-\tau /b(t)}d\tau \\
&=&\frac{dx}{\pi x}\sum_{k=1}^{\infty }(-x^{-\alpha
})^{k}\frac{\Gamma (k\alpha +1)\sin (k(\theta -\alpha )\pi
/2)}{k!}\int_{0}^{+\infty }\tau
^{k-1}e^{-\tau /b(t)}d\tau \\
&=&-\frac{\alpha dx}{\pi x}\sum_{k=1}^{\infty }\frac{(-b(t)x^{-\alpha })^{k}%
}{(k-1)!}\Gamma (k\alpha )\Gamma (k)\sin (k(\alpha -\theta )\pi /2) \\
&=&-\frac{\alpha dx}{x}\sum_{k=1}^{\infty }\frac{(-b(t)x^{-\alpha })^{k}}{%
(k-1)!}\frac{\Gamma (k\alpha )\Gamma (k)}{\Gamma (k(\alpha -\theta
)/2)\Gamma (1-k(\alpha -\theta )/2)} \\
&=&\alpha dx b(t)x^{-\alpha -1}\sum_{l=0}^{\infty
}\frac{(-b(t)x^{-\alpha })^{l}}{l!}\frac{\Gamma (l\alpha +\alpha
)\Gamma (l+1)}{\Gamma ((l+1)(\alpha
-\theta )/2)\Gamma (1-(l+1)(\alpha -\theta )/2)} \\
&=&\frac{\alpha dx b(t)x^{-\alpha -1}}{2\pi i}\int_{L}\frac{\Gamma
(s)\Gamma (\alpha -\alpha s)\Gamma (1-s)(b(t)x^{-\alpha })^{-s} ds
}{\Gamma ((\alpha
-\theta )(1-s)/2)\Gamma (1-(\alpha -\theta )(1-s)/2)} \\
&=&\alpha dx b(t)x^{-\alpha -1}{\LARGE H}_{3,2}^{1,2}\left[ \left. \frac{b(t)%
}{x^{\alpha }}\right\vert
\begin{array}{ccc}
(1-\alpha ,\alpha ) & (0,1) & \left( 1-\frac{\alpha -\theta }{2},\frac{%
\alpha -\theta }{2}\right) \\
(0,1) & \left( 1-\frac{\alpha -\theta }{2},\frac{\alpha -\theta
}{2}\right)
& \;%
\end{array}%
\right]
\end{eqnarray*}%
which coincides with (\ref{hh}), by applying (1.60) of \cite{MAT}, with $%
\sigma =1.$ It is immediate to check that condition (\ref{1.6}) is
satisfied. As far as the case $\alpha \in (1,2)$ with $|\theta
|\leq 2-\alpha ,$ is concerned, we have that

\begin{eqnarray}
\nu _{t}^{G_{\alpha ,\theta }^{II}}(dx) &=&dx\int_{0}^{+\infty }\frac{1}{%
\tau ^{1/\alpha }}p_{\alpha }\left( \frac{x}{\tau ^{1/\alpha
}};\theta
,1\right) \tau ^{-1}e^{-\tau /b(t)}d\tau   \label{hh3} \\
&=&\frac{dx}{\pi x}\sum_{k=1}^{\infty }(-x)^{k}\frac{\Gamma
(k/\alpha
+1)\sin (k(\theta -\alpha )\pi /2\alpha )}{k!}\int_{0}^{+\infty }\tau ^{-%
\frac{k}{\alpha }-1-\frac{1}{\alpha }}e^{-\tau /b(t)}d\tau   \nonumber \\
&=&-\frac{dx}{\alpha \pi x}\sum_{k=1}^{\infty
}\frac{(-b(t)^{-1/\alpha }x)^{k}}{(k-1)!}\Gamma (k/\alpha )\Gamma
(-k/\alpha )\sin (k(\alpha -\theta
)\pi /2\alpha )  \nonumber \\
&=&-\frac{dx}{\alpha x}\sum_{k=1}^{\infty }\frac{(-b(t)^{-1/\alpha }x)^{k}}{%
(k-1)!}\frac{\Gamma (k/\alpha )\Gamma (-k/\alpha )}{\Gamma
(k(\alpha -\theta
)/2\alpha )\Gamma (1-k(\alpha -\theta )/2\alpha )}  \nonumber \\
&=&\frac{dx b(t)^{-1/\alpha }}{\alpha }\sum_{l=0}^{\infty }\frac{%
(-b(t)^{-1/\alpha }x)^{l}}{l!}\frac{\Gamma ((l+1)/\alpha )\Gamma
(-(l+1)/\alpha )}{\Gamma ((l+1)(\alpha -\theta )/2\alpha )\Gamma
(1-(l+1)(\alpha -\theta )/2\alpha )}  \nonumber \\
&=&\frac{dx b(t)^{-1/\alpha }}{2\alpha \pi
i}\int_{\mathcal{L}}\frac{\Gamma
(s)\Gamma ((1-s)/\alpha )\Gamma ((s-1)/\alpha )(b(t)^{-1/\alpha }x)^{-s}ds}{%
\Gamma ((\alpha -\theta )(1-s)/2\alpha )\Gamma (1-(\alpha -\theta
)(1-s)/2\alpha )}  \nonumber \\
&=&\frac{dx}{\alpha b(t)^{1/\alpha }}{\LARGE H}_{2,3}^{2,1}\left[
\left. \frac{|x|}{b(t)^{1/\alpha }}\right\vert
\begin{array}{ccc}
\left( 1-\frac{1}{\alpha },\frac{1}{\alpha }\right)  & \left(
\frac{\alpha
+\theta }{2\alpha },\frac{\alpha -\theta }{2\alpha }\right)  & \; \\
(0,1) & \left( -\frac{1}{\alpha },\frac{1}{\alpha }\right)  & \left( \frac{%
\alpha +\theta }{2\alpha },\frac{\alpha -\theta }{2\alpha }\right)
\nonumber
\end{array}%
\right] ,
\end{eqnarray}%
which, by considering that $\alpha >\theta ,$ coincides with
(\ref{hh2}). For $x<0$ similar steps lead to (\ref{hh}) and
(\ref{hh2}), respectively, for $\alpha \in (0,1)$ and $\alpha \in
(1,2),$ by applying formula (6.4) in \cite{FEL}.
\end{proof}

\begin{remark}
We can check that (\ref{hh}) coincides with the L\'{e}vy measure
of the GS subordinator, for $\theta =-\alpha $, $b(t)=b,$ for any
$t,$ and $\alpha \in (0,1)$ (see \cite{BUR}). Indeed, in this
case, we can write (\ref{hh}) as
follows, by applying (1.56), (1.58) and (1.48) in \cite{MAT}:%
\begin{eqnarray*}
\nu _{t}^{G_{\alpha }^{II}}(dx) &=&\frac{\alpha }{x}dx{\LARGE H}_{2,1}^{1,1}%
\left[ \left. \frac{b(t)}{x^{\alpha }}\right\vert
\begin{array}{cc}
(1,1) & (1,\alpha ) \\
\left( 1,1\right)  & \;%
\end{array}%
\right]  \\
&=&\frac{\alpha }{x}dx{\LARGE H}_{1,2}^{1,1}\left[ \left. \frac{x^{\alpha }}{%
b(t)}\right\vert
\begin{array}{cc}
(0,1) & \; \\
\left( 0,1\right)  & (0,\alpha )%
\end{array}%
\right]  \\
&=&\alpha x^{-1}E_{\alpha }(-x^{\alpha }/b(t))dx,
\end{eqnarray*}%
where $E_{\alpha }(z)$ denotes the Mittag-Leffler function, for
$\alpha >0,$
$z\in \mathbb{C}.$ Moreover, by letting $\alpha \rightarrow 1^{-},$ we get (%
\ref{mu}).
\end{remark}

\begin{remark}
On the other hand, we can check that (\ref{hh2}), in the special case $%
\alpha =2$, $b(t)=b,$ for any $t,$ and $\theta =0$, coincides with the L\'{e}%
vy measure of the VG process (see formula (13) in \cite{MAD}).
Indeed, from the last line of (\ref{hh3}) and taking into account
(1.60), for $\sigma =-1$,
and (1.37)-(1.38) in \cite{MAT}, we have that%
\begin{eqnarray*}
\nu _{t}^{G_{\alpha ,\theta }^{II}}(dx) &=&\frac{dx}{2|x|}{\LARGE H}%
_{1,2}^{2,0}\left[ \left. \frac{|x|}{\sqrt{b}}\right\vert
\begin{array}{cc}
\left( 1,\frac{1}{2}\right)  & \, \\
\left( 1,1\right)  & \left( 0,\frac{1}{2}\right)
\end{array}%
\right]  \\
&=&\frac{dx}{2\sqrt{b}}{\LARGE H}_{1,2}^{2,0}\left[ \left. \frac{|x|}{\sqrt{b%
}}\right\vert
\begin{array}{cc}
\left( \frac{1}{2},\frac{1}{2}\right)  & \; \\
\left( 0,1\right)  & \left( -\frac{1}{2},\frac{1}{2}\right)
\end{array}%
\right]  \\
&=&\frac{dx}{2\sqrt{b}}\frac{1}{2\pi i}\int_{L}\frac{\Gamma
(s)\Gamma \left(
\frac{s}{2}-\frac{1}{2}\right) \left( \frac{|x|}{\sqrt{b}}\right) ^{-s}ds}{%
\Gamma \left( \frac{s}{2}+\frac{1}{2}\right) } \\
&=&\frac{dx}{\sqrt{b}}\frac{1}{2\pi i}\int_{L}\frac{\Gamma (s)\left( \frac{%
|x|}{\sqrt{b}}\right) ^{-s}ds}{s-1} \\
&=&\frac{dx}{\sqrt{b}}\sum_{\nu =-1}^{\infty }\lim_{s\rightarrow
-\nu
}(s+\nu )\frac{\Gamma (s)}{s-1}\left( \frac{|x|}{\sqrt{b}}\right) ^{-s} \\
&=&\frac{dx}{\sqrt{b}}\sum_{\nu =-1}^{\infty }\lim_{s\rightarrow -\nu }\frac{%
\Gamma (s+\nu +1)}{(s+\nu -1)....s-1}\left(
\frac{|x|}{\sqrt{b}}\right) ^{-s}
\\
&=&\frac{dx}{|x|}\sum_{\nu =-1}^{\infty }\frac{(-1)^{\nu +1}}{(\nu +1)!}%
\left( \frac{|x|}{\sqrt{b}}\right) ^{\nu
+1}=\frac{dx}{|x|}e^{-|x|/\sqrt{b}}.
\end{eqnarray*}%
Again, for $\theta =-\alpha $ and by letting $\alpha \rightarrow
1^{+},$ we can easily obtain from (\ref{hh2}) formula (\ref{mu}),
by considering also (1.60) in \cite{MAT}.
\end{remark}

\subsection{Inhomogeneous Variance Gamma process}
For $\theta =0$ and $\alpha =2$, we have the important special case
represented by the time-inhomogeneous VG process (hereafter $VG^{I}$),
which we define as $\left\{ B(\mathit{%
\Gamma }^{I}(t)),t\geq 0\right\} ,$ where $B$ is a Brownian
motion such that $B(t)\sim \mathcal{N}(0,2t)$. We note that a pioneering definition of this process
can be found in  \cite{MAD2}. Its characteristic function reads%
\begin{equation}
\mathbb{E}e^{i\xi B(\mathit{\Gamma }^{I}(t))}=\exp \left\{ -\int_{0}^{t}\ln
\left( 1+b(s)\xi ^{2}\right) ds\right\}  \label{vg}
\end{equation}%
and its transition operator satisfies the following initial value problem%
\begin{equation*}
\left\{
\begin{array}{l}
\frac{\partial }{\partial t}u(x,t)=-\ln \left( 1-b(t)\frac{\partial ^{2}}{%
\partial x^{2}}\right) u(x,t) \\
u(x,s)=f(x).%
\end{array}%
\right.
\end{equation*}%
Thus the generator of $\left\{ B(\mathit{\Gamma ^I}(t)),t\geq 0\right\} $ can be written as $\mathcal{A}_{t}=-\ln \left(
1-b(t)\frac{\partial ^{2}}{\partial x^{2}}\right) ,$ by recalling the
representation (\ref{shilling}) together with (\ref{derivative}).

The mean-square displacement of $VG^{I}$ can be evaluated as follows:%
\begin{equation*}
\mathbb{E}B^{2}(\mathit{\Gamma }^{I}(t))=\mathbb{E}\mathit{\Gamma }%
^{I}(t)=\int_{0}^{t}b(s)ds,
\end{equation*}%
which is finite, by assumption. We note that the choice of $b(t)$
determines the asymptotic properties of the process,
which can be either diffusive, sub-diffusive, or super-diffusive.

We further observe that, also in the non-homogeneous
case, the $VG^I$ process can be represented as the difference of two independent
inhomogeneous gamma subordinators $\mathit{\Gamma }_{1}^{I}$ and $\mathit{\Gamma }%
_{2}^{I}, $  each having the following
characteristic functions
\begin{align*}
 \mathbb{E}e^{i\xi \mathit{\Gamma }_1^{I}(t)}=\mathbb{E}e^{i\xi \mathit{\Gamma }_2^{I}(t)}
 =\exp \left\{ -\int_{0}^{t}\ln \left( 1- i\sqrt{b(s)}\xi \right)
ds\right\}
\end{align*}
 Indeed, we can write%
\begin{eqnarray*}
\mathbb{E}e^{i\xi \mathit{\Gamma }_{1}^{I}(t)-i\xi \mathit{\Gamma }%
_{2}^{I}(t)} &=&\exp \left\{ -\int_{0}^{t}\left[ \ln \left( 1-i\sqrt{b(s)}%
\xi \right) +\ln \left( 1+i\sqrt{b(s)}\xi \right) \right] ds\right\} \\
&=&\exp \left\{ -\int_{0}^{t}\ln \left[ \left( 1-i\sqrt{b(s)}\xi \right)
\left( 1+i\sqrt{b(s)}\xi \right) \right] ds\right\} \\
&=&\exp \left\{ -\int_{0}^{t}\ln \left( 1+b(s)\xi ^{2}\right) ds\right\} =%
\mathbb{E}e^{i\xi B(\mathit{\Gamma }^{I}(t))}
\end{eqnarray*}%
and this property is very important for financial applications, in order to
model stochastic volatility.

Under the additional assumption that $b(t)<K,$ for any $t\geq 0$ and for a
constant $K<1,$ it is easy to check that the moment generating function of
$VG^{I}$ is finite for any $|\gamma |\leq \frac{1}{\sqrt{k}}$:
\begin{equation*}
\mathbb{E}e^{\gamma B(\mathit{\Gamma }^{I}(t))}=\exp \left\{ \int_{0}^{t}\ln
\left( 1-\gamma ^{2}b(s)\right) ds\right\} <\infty .
\end{equation*}

As a consequence of the subordination by the gamma process, the VG process has infinitely many small jumps
and a finite number of large jumps. The subordination implies the introduction of a new parameter (with respect to the Brownian
case) and enables the VG model to capture the negative skewness
and excess kurtosis, which are often displayed, in financial applications, by the log returns. The
variance parameter of the gamma process controls the degree of randomness
of subordination: indeed large values of the variance
result in fatter tails of the density. This feature is confirmed, in the
inhomogeneous case, by considering Theorem 13, for $\alpha =2$ and $\theta =0$.

\end{document}